\definecolor{mintgreen}{RGB}{152,255,152}
\definecolor{pinksalmon}{RGB}{255,102,102}
\definecolor{hueso}{RGB}{245,245,220}
\definecolor{marfil}{RGB}{255,253,208}
\definecolor{amarillo}{RGB}{255,255,0}
\newcommand{\vecq}{\vec{\mathbf{q}}}
\newcommand{\veci}{\vec{\textbf{\i}}}
\newcommand{\ord}{\textnormal{ord}}
\newcommand{\Ker}{\textnormal{Ker}}
\newcommand{\Stab}{\textnormal{Stab}}
\newcommand{\Nm}{\textnormal{Nm}}
\newcommand{\Cl}{\textnormal{Cl}}
\numberwithin{equation}{section}
\newtheorem{theorem}{Theorem}[section]
\newtheorem{lemma}[theorem]{Lemma}
\newtheorem{proposition}[theorem]{Proposition}
\def\moverlay{\mathpalette\mov@rlay}
\def\mov@rlay#1#2{\leavevmode\vtop{%
   \baselineskip\z@skip \lineskiplimit-\maxdimen
   \ialign{\hfil$\m@th#1##$\hfil\cr#2\crcr}}}
\newcommand{\charfusion}[3][\mathord]{
    #1{\ifx#1\mathop\vphantom{#2}\fi
        \mathpalette\mov@rlay{#2\cr#3}
      }
    \ifx#1\mathop\expandafter\displaylimits\fi}
\newcommand{\suchthat}{\;\ifnum\currentgrouptype=16 \middle\fi|\;}
\newcommand{\Z}{\mathbb{Z}}
\newcommand{\Q}{\mathbb{Q}}
\newcommand{\R}{\mathbb{R}}
\newcommand{\Gal}[1]{\operatorname{Gal}#1}
\newcommand{\bb}[1]{\mathbb{#1}}
\newcommand{\ind}{\mathrm{ind}}
\theoremstyle{definition}
\newtheorem{remark}[theorem]{Remark}
\newtheorem{hypothesis}[theorem]{Hypothesis}
\def\R{{\mathbb R}}
\def\Z{{\mathbb Z}}
\def\Q{{\mathbb Q}}
\def\O_K{{\Cal{O}_{K}}}
\def\O_F{{\Cal{O}_{F}}}
\def\N_F{{\Cal{N}_{F/\Q}}}
\newcommand\Disc{\textnormal{Disc}}
\begin{document}

\title{Malle's conjecture for $G \times A$, with $G=S_3, S_4, S_5$}

\author{Riad Masri}

\address{Department of Mathematics, Texas A\&M University, Mailstop 3368, College Station, TX 77843 }

\email{masri@math.tamu.edu}

\author{Frank Thorne}

\address{Department of Mathematics, University of South Carolina, 1523 Greene Street, Columbia, SC 29208}

\email{thorne@math.sc.edu}

\author{Wei-Lun Tsai}

\address{Department of Mathematics, University of South Carolina, 1523 Greene Street, Columbia, SC 29208}

\email{weilun@mailbox.sc.edu}

\author{Jiuya Wang}

\address{Department of Mathematics, University of Georgia, Boyd Research and Education Center, Athens, GA 30602}

\email{jiuya.wang@uga.edu}

\maketitle

\begin{abstract} We prove Malle's conjecture for $G \times A$, with $G=S_3, S_4, S_5$ and $A$ an abelian group. 
This builds upon work of the fourth author, who proved this result with restrictions on the primes dividing $A$.
\end{abstract}

\section{Introduction and main results}


\subsection{Main results} In \cite{Mall1,Mall2}, Malle gave a precise conjecture for the asymptotic distribution of 
number fields of fixed degree and bounded discriminant with prescribed Galois group. In this paper, 
we will prove Malle's conjecture for number fields with Galois group $G \times A$, with $G=S_3, S_4, S_5$, and $A$ an abelian group.

The result, as well as its proof, builds upon and improve the fourth author's previous work \cite{W21}, who proved this result with a restriction 
on which primes could divide $A$.

Let $G\subset S_n$ be a degree $n$ transitive permutation group. We define the counting function 
	$$N(X, G): = \#\{ E: [E:\mathbb{Q}] = n, \Gal(E^c/\Q)\simeq G\subset S_n, \Disc(E)< X \},$$
where $\Gal(E^c/\Q)$ is not only an abstract group, but also has a permutation action on $n$ different embeddings $\sigma_i: E\rightarrow \bar{\Q}$ of the number field $E$, and $\Gal(E^c/\Q)\simeq G$ are isomorphic as permutation groups. 

For any permutation $g\in S_n$, we define $\ind(g):= n-\# \{ \text{cycles of } g \}$. Then Malle's conjecture asserts there is a constant $c(G) > 0$ such that 
\begin{align}\label{mc1}
N(X,G)  \sim c(G) X^{1/a(G)}(\log X)^{b(G)-1}
\end{align}
as $X \rightarrow \infty$ with
\begin{align*}
a(G)&:=\min \{\textrm{ind}(g):~ 1 \neq g \in G\},\\
b(G)&:= \#( \{[g] \in \textrm{Conj}(G):~\textrm{ind}(g)=a(G)\}/G_\Q).
\end{align*} 
where $G_{\Q}$ acts on the set of conjugacy classes $\textrm{Conj}(G)$ via a cyclotomic character. See \cite{Mall2} for more details. 

In this paper, we focus on $G = S_d\times A\subset S_{d|A|}$ where $G$ is a direct product of the natural permutation group $S_d\subset S_d$ and the regular permutation representation $A\subset S_{|A|}$. We prove the following:
\begin{theorem}\label{mctheorem1} Malle's conjecture (\ref{mc1}) is true for $G \times A$,
with $G = S_3, S_4, S_5$. In particular, Malle's conjecture is true for the dihedral group 
$D_6 \cong S_3 \times C_2$.
\end{theorem}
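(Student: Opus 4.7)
The plan is to decompose each field $E$ with $\Gal(E^c/\Q) \simeq S_d \times A$ as a compositum $E = F \cdot K$, where $F/\Q$ is a non-Galois degree-$d$ $S_d$-field and $K/\Q$ is an abelian $A$-extension whose Galois closure is linearly disjoint from $F^c$. This correspondence is a bijection, giving
\[
N(X, S_d \times A) \;=\; \sum_{K} \#\{F : \Gal(F^c/\Q) \simeq S_d,\ F^c \cap K = \Q,\ \Disc(F K) < X\},
\]
with the outer sum ranging over abelian $A$-extensions $K/\Q$.

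The central ingredient is a discriminant formula for the compositum. Applying the conductor--discriminant formula to $E^c = F^c \cdot K$ and descending to $E$, one obtains
\[
\Disc(E) \;=\; \Disc(F)^{|A|}\,\Disc(K)^{d}\,C(F,K),
\]
where $C(F,K)$ is a local correction factor supported on primes at which both $F^c$ and $K$ ramify. For $p \nmid |A|$ the local contribution is determined by the cycle structure of inertia generators and admits a clean closed form; for $p \mid |A|$, $K$ may be wildly ramified and a case analysis of possible local inertia combinations is required. A direct index calculation in $S_d \times A \subset S_{d|A|}$ gives $a(S_d \times A) = |A|$, realized by transpositions $(t,1)$, with a single $G_\Q$-orbit on minimal-index classes, so $b(S_d \times A) = 1$.

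For fixed $K$, the inner count reduces to a count of degree-$d$ $S_d$-fields $F$ with $\Disc(F)$ at most roughly $(X/\Disc(K)^d)^{1/|A|}$, subject to prescribed local conditions at each prime dividing $\Disc(K)$ --- conditions that enforce both the disjointness $F^c \cap K = \Q$ and the correct value of $C(F,K)$. For $d=3$ we invoke the sharp Davenport--Heilbronn theorem with arbitrary local conditions (Taniguchi--Thorne, Bhargava--Shankar--Tsimerman); for $d=4,5$, the analogous sharp asymptotics of Bhargava and Bhargava--Shankar--Tsimerman. Summing over abelian $A$-extensions $K$ produces
\[
N(X, S_d \times A) \;\sim\; X^{1/|A|} \cdot \sum_{K} \frac{c(K)}{\Disc(K)^{d/|A|}}.
\]
The Dirichlet sum on the right converges with no logarithmic factor, since $d/|A| > 1/a(A) = p_{\min}/(|A|(p_{\min}-1))$ reduces to $d > p_{\min}/(p_{\min}-1)$, which holds for $d \geq 3$.

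The principal obstacle, and the point of improvement over \cite{W17}, is uniform control at primes $p \mid |A|$. Two conditions must be secured: (i) the local discriminant contribution $C(F,K)$ at wild primes must be computed exactly, rather than being absorbed into a factor of size $X^\e$; and (ii) the $S_d$-field counts with prescribed wild local conditions must be sufficiently uniform --- ideally with a power-saving error term --- so that the sum over $K$ can be safely interchanged with the asymptotic. The paper \cite{W17} sidestepped issue (i) by restricting to $K$ unramified at primes dividing $|A|$; removing this hypothesis is the main new technical content.
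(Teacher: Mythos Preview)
Your setup is correct and matches the paper: the bijection $(F,K) \leftrightarrow FK$, the discriminant identity $\Disc(FK) = \Disc(F)^{|A|}\Disc(K)^d / \Delta(F,K)$, and the index computation giving $a(S_d\times A) = |A|$, $b = 1$ are all exactly what the paper uses. But you have misdiagnosed the obstruction, and the resulting proposal has a real gap.

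First, wild primes are not the issue. The paper notes (Lemma~\ref{lem:Delta}) that $\prod_{p\mid |A|d!} \Delta_p(F,K) = O_{d,A}(1)$, so the wild correction is a bounded constant and can be absorbed harmlessly. You also mischaracterize \cite{W17}: the restriction there is that $|A|$ be coprime to $2$, $6$, $30$ (for $d=3,4,5$), i.e.\ $\gcd(|A|,d!)=1$, not that $K$ be unramified at primes dividing $|A|$. The extra difficulties when $\gcd(|A|,d!)>1$ are (a) that $F^c\cap K$ can be a nontrivial quadratic field, requiring a separate zero-density argument (Section~\ref{sec:zero_density}), and (b) that the tame index inequality $\ind(g)-\ind(g,h)/|A|\le 0$ of Lemma~\ref{lem:inequ} can hold with equality, which forces strictly stronger uniformity estimates in the tail.

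Second, and more seriously, your scheme of fixing $K$, applying the $S_d$-field asymptotic with local conditions at every prime of $\Disc(K)$, and then summing over $K$ requires \emph{pointwise} uniformity of the $S_d$-count in an unbounded set of local conditions. Such uniformity (e.g.\ \eqref{eq:conj_uniform}) is not known for any $d$. The paper's actual argument avoids this: it truncates, using exact asymptotics only at primes $p\le Y$ (finitely many local conditions, no uniformity needed), and then bounds the tail $|N-N_Y|$ via an \emph{averaged} uniformity estimate (Hypothesis~\ref{A3}, verified in Section~\ref{sec:A3}) over dyadic ranges of the common ramification locus. This averaging is the paper's main new idea, and it is what allows the removal of the coprimality hypothesis from \cite{W17}. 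Your proposal does not contain a substitute for it.
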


In \cite[Section 2.3]{W21} it is shown that $a(S_d \times A)=1/|A|$ and $b(S_d \times A)=1$ for all $d \geq 3$. Hence 
Theorem \ref{mctheorem1} implies that for $d=3,4,5$, 
\begin{equation}\label{eq:malle_predict}
N(X,S_d \times A)  \sim c(S_d \times A)X^{1/|A|}
\end{equation}
as $X \rightarrow \infty$.

As we explain later, 
the result continues to hold if an arbitrary finite set of splitting conditions are imposed upon the $S_d \times A$-fields being counted.

\subsection{Connection to the Colmez conjecture}
One motivation for this project arose from a connection to a conjecture of Colmez \cite{Col93} in arithmetic geometry, which we now
explain. 

A {\itshape CM field}
is a totally imaginary quadratic extension of a totally real number field.
Let $F$ be a totally real field with $\Gal(F^c/\mathbb{Q}) = G$.
If $E=FK$ is the compositum of $F$ and an imaginary quadratic field $K$ (so in particular, $\Gal(E^c/\Q) = G \times C_2)$, 
then we call $E$ a \textit{$G$-unitary CM field}.

Our interest 
in the distribution of $G$-unitary CM fields stems from our effort to understand the Colmez conjecture from an 
arithmetic statistical point of view. 

Let $\mathfrak{X}$ be an abelian variety with complex multiplication by the ring of integers of $E$.
The Colmez conjecture predicts a (deep) relationship between the Faltings height of $\mathfrak{X}$ and logarithmic derivatives 
of Artin $L$--functions at $s=0$ (this generalizes the classical Chowla-Selberg formula). 

The Galois group $\Gal(E^c/\Q)$ embeds as a subgroup of the wreath product $G \wr C_2$ (see \cite{BMT}). If $\Gal(E^c/\Q)=G \wr C_2$ 
then we call $E$ a \textit{$G$-Weyl CM field}, while if $\Gal(E^c/\Q)=G \times C_2$ then (as discussed above) we call $E$ a $G$-unitary CM field. So, 
in the former case the Galois group is as large as possible, while in the latter case the Galois group is as small as possible.

In \cite{BMT}, Barquero-Sanchez and the first and second-named authors proved that: (1) the Colmez conjecture is true for $G$-Weyl 
CM fields; and (2) the $G$-Weyl CM fields have density one in the set of all CM fields whose maximal totally real subfield has 
Galois group $G$. In other words, the Colmez conjecture is true for a \textit{random} CM field. 

The result (2) implies that the $G$-unitary CM fields have density zero. It is of great interest (and difficulty) to understand the asymptotic distribution 
of this thin set. For example, Yang and Yin \cite{YY16} proved that the Colmez conjecture is true for $S_d$-unitary CM fields, and
Theorem \ref{mctheorem1} and Remark \ref{rk:splitting} imply that for $d=3,4,5$, the number of 
$S_d$-unitary CM fields is
\begin{align}\label{eq:claim_colmez}
N_{2d}^{\textrm{unitary}}(X,S_d \times C_2)  \sim c^{\prime}(S_d \times C_2)X^{1/2}
\end{align}
as $X \rightarrow \infty$. So, not only are there infinitely many such fields, but we have a precise asymptotic formula 
for their distribution.  

Note that Yang and Yin \cite{YY16} also proved the Colmez conjecture for $A_d$-unitary CM fields, and Parenti \cite{P18} proved the Colmez 
conjecture for $\textrm{PSL}_2(\mathbb{F}_q)$-unitary CM fields. The framework of this paper can be used to give (conjectural) asymptotics for 
$N_{2d}^{\textrm{unitary}}(X,G \times A)$ with $G=A_d, \textrm{PSL}_2(\mathbb{F}_q)$.

\subsection{The distribution of $S_d$-and $A$-fields}
We now describe the relevant facts about $S_d$- and $A$-fields that will be needed in the proof. We fix an integer $d \geq 3$ and an abelian group $A$, and
define
\begin{align*}
\mathcal{F}_{S_d} &:= \{F : [F:\mathbb{Q}] = d, \ \Gal(F^c/\bb{Q}) \cong S_d\},\\
\mathcal{F}_{A} &:= \{K : \ [K:\mathbb{Q}] = |A|, \ \Gal(K/\Q) \cong A\}.
\end{align*}

We will need to count fields satisfying finite sets of {\itshape splitting conditions}, 
by which we mean the following.
For a finite set of primes $S$ and a positive integer $n$, let $\Sigma_{S, n}$ consist of a choice of \'etale $\Q_p$-algebra 
of dimension $n$ for each prime $p \in S$. 
If $E$ is a number field of degree $n$,
we say that $E \in \Sigma_{S,n}$ if for each prime $p \in S$, the \'etale $\Q_p$-algebra 
$E \otimes_{\mathbb{Q}}\mathbb{Q}_p \in \Sigma_{S,n}$. 
The \'etale $\Q_p$-algebra $K \otimes_{\mathbb{Q}}\mathbb{Q}_p$ determines the decomposition of the prime $p$ in $K$. 
The choice $p = \infty$ with $\Q_p = \R$ is permitted, in which case the splitting condition
determines the signature of $E$.

We will need the following hypotheses on the distribution of $S_d$-fields.
\begin{hypothesis}\label{assumption1} Writing
\[
N_{d}(X,\Sigma_{S,d}) := \# \{ F \in \Sigma_{S,d}: F \in \mathcal{F}_{S_d}, \ |\Disc(F)| < X \},
\]
assume that the following facts hold.
\end{hypothesis}

\begin{enumerate}[label=\textbf{\ref{assumption1}.\Alph*}]

\item\label{A2} (Asymptotic density with local conditions) For each 
finite set of primes $S$ and set of splitting conditions $\Sigma_{S, d}$,
we have
\begin{equation}\label{eq:asym}
N_{d}(X,\Sigma_{S,d}) \sim
C_{\Sigma_{S,d}}X
\end{equation}
for some constant $C_{\Sigma_{S,d}} > 0$. (The rate of convergence is not assumed to be uniform in $\Sigma_{S, d}$.)

\item \label{A3} (Averaged uniformity estimate) 
We assume a uniform upper bound for $N_{d}(X,\Sigma_{S,d})$, on average as 
$\Sigma$ ranges over tamely ramified ``splitting types'' in certain intervals. 
%

Write $g_{1}, \dots, g_{m}$ 
for representatives of the nontrivial
conjugacy classes in $S_d$.
For each $m$-tuple of coprime positive, squarefree integers $(q_1, q_2, \cdots, q_m)$, all coprime to $d!$, we define 
a {\itshape splitting type} $\Sigma_{(q_1, \cdots, q_m)}$ consisting,
for each prime $p$ dividing one of the $q_k$, of all choices of \'etale $\Q_p$-algebras of dimension $d$ such that the associated inertia group 
is conjugate to $\langle g_{k} \rangle$.
(See Section  \ref{subsec:splitting} for more concerning this notation.)

Then, for each $m$-tuple of positive integers $(Q_1, Q_2, \cdots, Q_m)$, we assume that the estimate
\begin{equation}\label{eq:A2}
\sum_{\Sigma} 
N_{d}(X,\Sigma) \ll X \prod_{k = 1}^m Q_k^{r_{g_k}}
\end{equation}
holds, where the sum is over all $\Sigma = \Sigma_{(q_1, \cdots, q_m)}$ with $q_k \in [Q_k, 2Q_k)$ for each $k$, for
constants $r_{g_k}$ which are required to satisfy the inequality 
\begin{equation}\label{eq:index_bound}
r_{g_k} + \mathrm{ind}(g_{k})-\frac{\mathrm{ind}(g_{k}, h)}{|A|}  < 0
\end{equation}
simultaneously for all nontrivial $h \in A$. The implied constant in \eqref{eq:A2} may depend at most on $d$ and $A$.

\end{enumerate}
We refer ahead to  Section \ref{sec:ant}
for more on the relevance of the inertia groups and the index functions described above.

Although Hypothesis \ref{A3} logically depends on both $d$ and $A$, implicitly we regard it as a hypothesis on $d$ alone, because
the shape of \eqref{eq:index_bound}
lends itself to a proof for all abelian groups $A$ simultaneously. This is what we will do for $d = 3, 4, 5$ in Section \ref{sec:A3}.

Our general main theorem is the following.
\begin{theorem}\label{thm:main_general}
If Hypothesis \ref{assumption1} holds for an integer $d \geq 3$ and an abelian group $A$, then the Malle conjecture (\ref{mc1}) is true
for $S_d \times A$.
\end{theorem}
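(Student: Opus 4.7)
The strategy is to parameterize each $S_d \times A$-number field $E$ bijectively as a compositum $E = FK$ with $F \in \mathcal{F}_{S_d}$ and $K \in \mathcal{F}_A$ linearly disjoint, and then count $N(X, S_d \times A)$ as a double sum over $(F, K)$. Since $S_d^{\mathrm{ab}} \cong C_2$, the disjointness is equivalent to $\Q(\sqrt{\Disc F}) \not\subseteq K$, a condition handled by M\"obius inversion. The tame local discriminant formula says $v_p(\Disc E) = \mathrm{ind}(g, h)$ when the inertia in $E^c$ at $p$ is generated by $(g, h) \in S_d \times A$, hence
\[
|\Disc(FK)| \;=\; |\Disc F|^{|A|}\,|\Disc K|^{d}\,\prod_{p} p^{-\delta_{p}(F,K)},
\]
with $\delta_p(F, K) \geq 0$ equal to zero whenever $p$ is unramified in one of $F, K$. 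Wild ramification at primes dividing $d!\,|A|$ is absorbed into finitely many fixed local conditions.

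For the main term I would restrict to pairs with $\gcd(\Disc F, \Disc K) = 1$, fix $K$, and invoke Hypothesis \ref{A2} (with the splitting conditions ``unramified at primes of $\Disc K$'' together with finitely many conditions at $d!\,|A|$). This gives an inner asymptotic $\sim C_K(X/|\Disc K|^d)^{1/|A|}$, so
\[
N(X, S_d \times A) \;\sim\; X^{1/|A|} \sum_{K \in \mathcal{F}_A} C_K\,|\Disc K|^{-d/|A|}.
\]
This series converges since Wright's theorem gives $|\{K \in \mathcal{F}_A : |\Disc K| \leq Y\}| = O(Y^{1/a(A)}(\log Y)^{O(1)})$ and $d/|A| > 1/a(A)$ whenever $d \geq 3$, yielding the Malle-predicted leading constant.

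The main obstacle is the error from pairs with common ramification. To control these I would dyadically decompose: for each tuple $(q_1, \dots, q_m)$ of pairwise coprime squarefree moduli, $q_k$ collecting the primes at which $F$ has tame inertia class $g_k$ and $K$ is simultaneously ramified, the common-ramification savings yield $|\Disc(FK)| \asymp |\Disc F|^{|A|}\,|\Disc K|^{d} \prod_k q_k^{-c_k(h)}$ for explicit $c_k(h)$ depending on the $A$-inertia generator $h$ at each common prime. With $q_k \in [Q_k, 2Q_k)$, the count of admissible $K$'s is bounded by Wright's theorem and the count of admissible $F$'s is bounded on average by Hypothesis \ref{A3} with exponent $r_{g_k}$. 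Comparing to the main-term normalization, the inequality \eqref{eq:index_bound}, $r_{g_k} + \mathrm{ind}(g_k) - \mathrm{ind}(g_k, h)/|A| < 0$, is precisely what makes each dyadic block $O(X^{1/|A|}\prod_k Q_k^{-\eta})$ for some $\eta > 0$, yielding a total error $o(X^{1/|A|})$. The hardest part will be the bookkeeping: the inequality must hold for \emph{every} nontrivial $h \in A$, so the power-counting at each common prime must be simultaneously valid across all possible $A$-inertia generators, and the $m$-fold dyadic sum over $(Q_1, \dots, Q_m)$ must be organized so that all cross-terms remain bounded by a convergent geometric series.
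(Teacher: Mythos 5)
The overall shape of your proposal is correct and matches the paper: bijection between $S_d\times A$-fields and linearly-disjoint pairs $(F,K)$, the discriminant-of-compositum formula, a dyadic decomposition over the moduli collecting the common ramification, and the bookkeeping that turns \eqref{eq:index_bound} into a convergent geometric series. But there are two genuine gaps, both of which the paper resolves by introducing a truncation parameter $Y$ (a finite set $S_Y$ of primes) that is conspicuously absent from your sketch.

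\textbf{The main-term constant is wrong.} You restrict the main term to pairs with $\gcd(\Disc F,\Disc K)=1$ and claim that the pairs with common ramification are collectively $o(X^{1/|A|})$. That is false: pairs in which $F$ and $K$ are simultaneously ramified at a single small prime $p$ (say $p=7$) already form a \emph{positive} proportion of all pairs, so they contribute a positive constant times $X^{1/|A|}$ to the count, not an error. In your own dyadic language, the block with a single $q_k$ of bounded size is $\asymp X^{1/|A|}\cdot Q_k^{-\eta}$, which is a fixed positive fraction of $X^{1/|A|}$, not $o(X^{1/|A|})$. The correct constant $C$ must accumulate contributions from \emph{every} local configuration at every prime, including those where $F$ and $K$ share ramification. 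The paper does this by defining $\Disc_Y(FK)$ using the true local discriminant at primes $\le Y$ and the naive product $\Disc_p(F)^{|A|}\Disc_p(K)^d$ at primes $>Y$, computing $N_Y\sim C_Y X^{1/|A|}$ by summing over the \emph{finitely many} local configurations at primes $\le Y$ (Section \ref{sec:disjoint_union}), proving the tail estimate $|N-N_Y|=o_Y(1)X^{1/|A|}$ (Proposition \ref{crucialprop}), and then squeezing $C=\lim_Y C_Y$. Your scheme would produce a strictly smaller constant.

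\textbf{The limit interchange is not justified.} Hypothesis \ref{A2} is explicitly stated with the caveat that the rate of convergence of $N_d(X,\Sigma_{S,d})/X\to C_{\Sigma_{S,d}}$ is \emph{not} assumed uniform in the splitting conditions. Your plan fixes $K$, applies \ref{A2} with local conditions that vary with $K$, and then sums over infinitely many $K$; exchanging $\lim_{X\to\infty}$ with $\sum_K$ requires exactly the uniformity that \ref{A2} does not provide. Truncating the set of local conditions to primes $\le Y$ reduces this to a finite sum and legitimizes the interchange; this is the second service the parameter $Y$ performs in the paper's proof, and you cannot do without it. A smaller point: for the abelian factor in the dyadic error blocks, Wright's asymptotic alone is insufficient; you need the uniformity estimate \eqref{eq:abelian2}, which gives the crucial $(X/q)^{a_A}$ savings when $q\mid\Disc(K)$ — this is precisely what produces the factor $\prod_k Q_{i_k}^{a_A\theta_k}$ in \eqref{eq:before_epsilon}. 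Finally, your M\"obius-inversion idea for enforcing linear disjointness is plausible but undeveloped; the paper instead proves a zero-density bound (Section \ref{sec:zero_density}) for the non-disjoint pairs, using that $p\,\|\,\Disc(F)\Rightarrow p\mid\Disc(K)$ in that case.
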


We prove this in Section \ref{maintheoremproof}.

Hypothesis \ref{A2} is known for $d = 3$ by Davenport-Heilbronn \cite{DH} and for $d = 4, 5$ by Bhargava \cite{Bha1, Bha2}, and
conjectured for $d > 5$ by Bhargava. (For $d = 3$, see \cite{BST} or \cite{TT} for an explicit treatment of the local conditions.)

We will prove Hypothesis \ref{A3} for $d = 3, 4, 5$ and all $A$ in Section \ref{sec:A3}. Although the required bound is a bit unwieldy, we will see that it is quantitatively weak. 
For example, as can be seen from the proof, one sufficient condition would be a bound of the form
\begin{equation}
\sum_{q_1 \in [Q_1, 2Q_1)} 
\sum_{q_2 \in [Q_2, 2Q_2)} 
\# \{ F \in \mathcal{F}_{S_d} : \ |\Disc(F)| < X, \ q_1 q_2 ^2 \mid \Disc(F) \} \ll_\epsilon X Q_1^{\epsilon} Q_2^{-\eta},
\end{equation}
assumed for every $\epsilon > 0$ and for any $\eta > 0$. If Hypothesis \ref{A2} is ever proved for any $d > 5$, it seems plausible that
the same techniques might also prove Hypothesis \ref{A3}.

\medskip
Our proof also yields an asymptotic formula (of the same order of magnitude) 
if a finite set of splitting conditions are imposed upon the $S_d \times A$-fields being counted.
(See Remark \ref{rk:splitting}.)
In particular, such conditions are needed in 
our application \eqref{eq:claim_colmez}
to the Colmez conjecture, where $F$ is assumed to be totally real and $K$ is imaginary quadratic. 

Finally, although we only state our results for $S_d \times A$, our techniques should yield a proof of Malle's conjecture
for $H \times A$ for any transitive subgroup $H \subseteq S_d$, provided that 
$da_H > |A|a_A$ and that an analogue of Hypothesis \ref{assumption1} holds.

\subsection{The distribution of abelian fields}
Let $A$ be an abelian group, and let $\Sigma_{S,A} = \Sigma_{S, |A|}$ be a finite set of splitting conditions, with 
\[
N_{A}(X,\Sigma_{S,A}) := \# \{ K \in \Sigma_{S,A}: F \in \mathcal{F}_{A}, \ |\Disc(K)| < X \}.
\]

\begin{theorem}\label{thm:abelian}
The following are true:
\begin{enumerate}
\item 
We have
\begin{equation}\label{eq:abelian1}
N_A(X,\Sigma_{S,A}) \sim C_{\Sigma_{S, A}} X^{a_A} (\log X)^{b_A}
\end{equation}
with constants defined as follows: 
\begin{itemize}
\item $a_A := \big(|A| (1 - p^{-1}) \big)^{-1}$, where $p$ is the smallest prime divisor of $|A|$;
\item $C_{\Sigma_{S, A}}$ is a constant depending on $\Sigma_{S, A}$ (and on $A$); 
\item $b_A = b(A) - 1$, with $b(A)$ as in \eqref{mc1}.
\end{itemize}
\item
Suppose, for an arbitrary integer $q$, that $\Sigma_{S, A}$ includes a restriction that
every $K \in \Sigma_{S, A}$ satisfies $q \mid \Disc(K)$. Then we have
\begin{equation}\label{eq:abelian2}
N_A(X,\Sigma_{S,A}) \ll C^{\omega(q)} \left(\frac{X}{q}\right)^{a_A} (\log X)^{b_A}
\end{equation}
for an absolute constant $C$, uniformly in $q$ and $\Sigma_{S, A}$. In particular, for $q$ squarefree, we have
\begin{equation}\label{eq:abelian3}
 C_{\Sigma_{S, A}} \ll C^{\omega(q)}/q.
 \end{equation}
 \end{enumerate}
\end{theorem}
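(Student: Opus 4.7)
The plan is to deduce both parts from an analysis of the Dirichlet series
\begin{equation*}
F_A(s, \Sigma_{S,A}) := \sum_{K \in \mathcal{F}_A \cap \Sigma_{S,A}} |\Disc(K)|^{-s},
\end{equation*}
which by class field theory and the conductor--discriminant formula admits an Euler product $\prod_p L_p(s)$. Here $L_p(s)$ enumerates \'etale $\Q_p$-algebras of dimension $|A|$ whose Galois group embeds in $A$, weighted by local discriminant exponents; the splitting conditions at $p \in S$ simply replace the corresponding factors by prescribed variants. Part (1) then follows from Wright's theorem on abelian fields: modifying finitely many Euler factors does not change the analytic character of $F_A$, which extends meromorphically past $\op{Re}(s) = a_A$ with a pole of order exactly $b(A)$ at $s = a_A$, and a Delange-type Tauberian theorem yields \eqref{eq:abelian1}.

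For part (2), I would track how the condition $q \mid \Disc(K)$ affects each local Euler factor. Write $q = \prod p^{k_p}$; at each $p \mid q$ we restrict to local algebras whose local discriminant exponent is at least $k_p$, replacing $L_p(s)$ by a truncated factor $L_p^{(k_p)}(s)$. For $p \nmid |A|$, tame local class field theory shows $L_p^{(k_p)}(s)$ is a finite sum of terms of the form $c \cdot p^{-s|A|(1-1/e)}$ with $|A|(1-1/e) \geq k_p$, each trivially bounded by $p^{-k_p s}$, yielding
\begin{equation*}
\prod_{p \mid q} L_p^{(k_p)}(a_A) \ll C^{\omega(q)} q^{-a_A},
\end{equation*}
with the finitely many primes dividing $\gcd(q, |A|)$ (which can support wild ramification) absorbed into the $C^{\omega(q)}$ factor. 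For squarefree $q$ coprime to $|A|$ the bound sharpens: the \emph{minimal} nonzero local discriminant exponent is exactly $d_A = 1/a_A$ (coming from inertia of order equal to the smallest prime divisor $p_{\min}$ of $|A|$), so $L_p^{\op{ram}}(a_A) \ll 1/p$ and the product becomes $\ll C^{\omega(q)}/q$, giving \eqref{eq:abelian3}.

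The main obstacle is converting these Euler-product bounds into the claimed count uniformly in $q$, since standard Tauberian theorems do not automatically respect auxiliary parameters. My plan is to either adapt the Selberg--Delange method, carefully tracking how the leading Laurent coefficient at $s = a_A$ depends on $q$ through the local factors above, or to use a Perron-style contour integral in which the estimate $|F_A^{(q)}(s)| \ll C^{\omega(q)} q^{-a_A} \cdot (\text{$q$-independent factor})$ can be inserted directly under the integral. In either case, the pole at $s = a_A$ of order $b(A)$ produces the $(\log X)^{b(A)-1}$ factor, while the local corrections at primes dividing $q$ deliver the $q^{-a_A}$ (respectively $1/q$) saving claimed in \eqref{eq:abelian2} and \eqref{eq:abelian3}.
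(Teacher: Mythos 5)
The paper does not prove Theorem \ref{thm:abelian} from scratch: part (1) is cited from \cite[Corollary 2.8 and Lemma 2.10]{Wood10} (see also \cite{wright}), part (2) is precisely \cite[Theorem 4.13]{W17}, and \eqref{eq:abelian3} is deduced from \eqref{eq:abelian2} using the short observation that, for squarefree $q$, $q \mid \Disc(K)$ forces $q^{1/a_A} \mid \Disc(K)$ because each ramified prime contributes at least $|A|(1 - 1/p_{\min}) = 1/a_A$ to $v_p(\Disc(K))$. That shortcut is cleaner than your direct computation of the ramified Euler factor (though the two are equivalent in substance, and your local factor bounds are correct), so it is worth adopting. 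Your Dirichlet-series setup for part (1) is the standard Wright/Wood argument and is fine.

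The genuine gap is the one you already flag: converting $\prod_{p \mid q} L_p^{(k_p)}(a_A) \ll C^{\omega(q)} q^{-a_A}$ into a count that is \emph{uniform} in $q$. This is not a formality that a Tauberian theorem hands you for free, and it is exactly the content of \cite[Theorem 4.13]{W17}. Two issues have to be addressed explicitly. First, the naive Rankin move (take $\sigma = a_A + 1/\log X$ and bound $N_A(X) \leq X^\sigma F_A^{(q)}(\sigma)$) gives $(\log X)^{b(A)} = (\log X)^{b_A + 1}$, one log worse than the stated \eqref{eq:abelian2}; fortunately, an inspection of the proof of Proposition \ref{crucialprop} shows a lossy bound with an extra $X^{\varepsilon}$ would still suffice there, but to match the theorem as stated you need the sharper Selberg--Delange or Perron argument you mention. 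Second, and more seriously, whichever contour or Tauberian method you use, you must verify that the error term depends on $q$ \emph{only} through a factor like $C^{\omega(q)} q^{-a_A}$: it is not automatic that modifying $\omega(q)$ Euler factors keeps the contribution of the contour away from the pole controlled, since the Dirichlet polynomial $D_q(s) = \prod_{p\mid q} L_p^{(k_p)}(s)/L_p(s)$ can be large (e.g.\ $\gg C^{\omega(q)}$) on vertical lines. One standard fix is to expand $D_q(s) = \sum_m d_m m^{-s}$ (a finite sum supported on integers built from primes of $q$, with $\sum_m |d_m| m^{-a_A} \ll C^{\omega(q)} q^{-a_A}$) and apply the $q$-free Tauberian estimate to each shifted count, then sum; but then you still need the $q$-free estimate with an explicit, $\Sigma$-independent error term, which again is not assumed in Hypothesis \ref{A2}. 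In short, the plan is the right one, but the uniformity bookkeeping is the actual theorem being proved, and leaving it at ``adapt Selberg--Delange or Perron'' is not yet a proof.
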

The asymptotic formula \eqref{eq:abelian1} is studied in \cite{wright, Wood10}, and is a direct corollary of Corollary $2.8$ and Lemma $2.10$ in \cite{Wood10};
the uniformity estimate
\eqref{eq:abelian2} is \cite[Theorem 4.13]{W21}; and \eqref{eq:abelian3} follows from the implication
$q \mid \Disc(K) \implies q^{1/a_A} \mid \Disc(K)$.

\subsection{Discussion} The proof closely follows that of the fourth-named author in \cite{W21}.
In \cite{W21}, this author proved Malle's conjecture for $G \times A$ with $G = S_3, S_4, S_5$ and $A$
abelian with order coprime to $2$, $6$, and $30$ respectively. 
There are some technical difficulties 
when the orders of $G$ and $A$ have a common factor. Firstly, given a pair $(F, K)$, it is possible for
$F^c$ and $K$ to intersect nontrivially, and we must bound from above the number of pairs for which this occurs. Secondly, stronger uniformity
estimates for $G$-extensions are now required to show Hypothesis \ref{A3}. We resolve both issues here.

The basic strategy, based on that in \cite{W21}, is as follows. The problem of counting the compositum of number fields by discriminants can be reduced to the problem of
counting pairs $(F, K)$ with $|\Disc(FK)| < X$. We ``almost'' have $|\Disc(FK)| = |\Disc(F)|^{|A|} |\Disc(K)|^d$; in fact
(see Lemma \ref{lem:Delta}) we have
\[
|\Disc(FK)| = |\Disc(F)|^{|A|} |\Disc(K)|^d \Delta(F, K)^{-1}
\]
for an integer $\Delta(F, K)$ divisible only by those primes which ramify in $F$ and $K$, and which we
compute explicitly in Section \ref{verify} (apart from a bounded factor corresponding to any wildly ramified primes).

If we had $|\Disc(FK)| = |\Disc(F)|^{|A|} |\Disc(K)|^d$, then a Tauberian-type result would suffice to deduce asymptotics
for the $FK$ from those for $F$ and $K$. Such a result in the necessary form was proved in \cite[Section 3]{W21},
which we use here.

To incorporate the correction term $\Delta(F, K)$ we break into two cases. When $\Delta(F, K)$ is small,
we incorporate asymptotics (as stipulated in Hypothesis \ref{A2}) 
for the number of pairs $(F, K)$ satisfying relevant splitting conditions. For $\Delta(F, K)$ large, we prove
a ``tail estimate'', bounding the total contribution of such pairs $(F, K)$.

When we consider abelian groups $A$ with small prime divisors, we need to address extra difficulties. The first issue could happen only when the abelian group $A$ has even order. For such an abelian group $A$, an $A$-extension $K$ could have a common subfield with an $S_d$-extension for arbitrary $d$. Therefore we need to get rid of such cases in enumerating all possible pairs. We resolve this issue in Section \ref{sec:zero_density}. 

The second issue is more serious. Roughly speaking, for a fixed $S_d$, in order to prove the theorem where we pair with a particular abelian group $A$, it can be observed in Lemma $5.1$ in \cite{W21} that when the minimal prime divisor of $A$ tends to be smaller, we are required to show a better ``tail estimate". The situation gets even worse when the order of an abelian group $A$ has a non-trivial common factor with $|S_d|$, which is $d!$. We make this relation explicit in Section \ref{sec:A3}. Both aspects point to the difficulty that we need to give an extremely good ``tail estimate" in order to prove the theorem. 

Our innovation here, which we use to streamline and strengthen the proof of this tail estimate, is an {\itshape averaging argument}.
As one example, it would be useful here if we could prove, uniformly for all squarefree $q$, that
\begin{equation}\label{eq:conj_uniform}
\# \{ K \ : [K : \Q] = 3, \ |\Disc(K)| < X, \ q \mid \Disc(K) \} \ll \frac{X}{q^{1 - \epsilon}}.
\end{equation}
Such an estimate is far from known, and in \cite[Section 4]{W21} the fourth author established some {\itshape uniformity estimates}
as a substitute.

In this paper we will modify the method of \cite{W21} to incorporate results like \eqref{eq:conj_uniform} {\itshape on average} 
over dyadic intervals $q \in [Q, 2Q]$, which in general is much easier. For example, the averaged analogue of
\eqref{eq:conj_uniform} is essentially
\begin{equation}
\# \{ K \ : [K : \Q] = 3, \ |\Disc(K)| < X, \ q \mid \Disc(K) \textnormal{ for some } q \in [Q, 2Q]\} \ll X,
\end{equation}
which follows simply from the Davenport-Heilbronn theorem (upon ignoring the divisibility condition).
Overall our required input, given as 
Hypothesis \ref{A3}, is a bit unwieldy to state -- 
but much easier to prove. 

\subsection{Subsequent work.} Some related works were carried out after the initial version of this paper was completed. These include the following:

Perhaps most prominently, Alberts, Lemke Oliver, the fourth author, and Wood \cite{ALOWW} developed a theory of `inductive counting methods',
with a number of related results in a much more general setting. Forthcoming work by the same authors will develop these ideas further. In addition,
Alberts and Bucur \cite{AB} developed a complementary approach to that of \cite{ALOWW} using multiple Dirichlet series, again with a number of new counting results.

Koymans, Lemke Oliver, Sofos, and the second author \cite{KLOST} obtained an asymptotic formula counting $6$-torsion in the class groups of quadratic fields, along with
one counting degree $12$ $D_6$-number fields. Roughly speaking, the proofs amount to counting $S_3 \times C_2$ fields by other invariants, so there are natural similarities
in the methodology. 

Finally, Dixit and Pasupulati \cite{DP} obtained results on the genus statistics of $S_3 \times C_q$ fields, where $q \neq 3$ is a prime, and in the case $q = 2$ they additionally
obtained a power savings in the error term on Malle's conjecture.
    
\section{Background from Algebraic Number Theory}\label{sec:ant}
Let $K$ be a number field of degree $n$ over $\Q$, not necessarily Galois, with $n$ different embeddings $\lambda_i: K\to \bar{\Q}$ for $i = 1, \cdots, n$. The absolute Galois group $G_{\Q}$ acts 
on these $n$ embeddings by composition, namely $\sigma (\lambda_i) =\sigma \circ \lambda_i$. Since this action is a permutation of these embeddings, we get a homomorphism
$$\rho: G_{\Q} \to S_n.$$
The kernel $\Ker(\rho)$ is the subgroup of $G_{\Q}$ that fixes every embedding of $K$, therefore fixes the Galois closure $K^c$ of $K/\Q$. 
The image $\rho(G_{\Q})\subset S_n$ is a transitive subgroup of $S_n$, and as an abstract group is the Galois group $\Gal(K^c/\Q)$.
 
Equivalently, if we number all the embeddings so that $\lambda_1= id$, then $H:=\rho(\Stab(\lambda_1)) \subset \Gal(K^c/\Q)=\rho(G_{\Q})$ 
is the subgroup fixing $K$.
One obtains 
an embedding $\Gal(K^c/\Q) \hookrightarrow S_n$ isomorphic to that induced by $\rho$,
if one starts with the Galois group $\Gal(K^c/\Q)$ and considers its left multiplication action on the left cosets of $H$. 

%
%
%
    
It is very useful to consider $\Gal(K^c/\Q)$ as a permutation group to study the splitting of primes in non-Galois fields. 
The following classical lemma can be found in \cite{Japanese, Neukirch, Melanie}.
    \begin{lemma}[]
    	Given a degree $n$ number field $K$ with $\Gal(K^c/\Q)\subset S_n$ and an arbitrary prime $p$, and let $D$ be the decomposition group and $I$ be the inertia group at a prime $\mathfrak{p}$ above $p$. Suppose that 
    	$$p = \prod_{1\le i\le r} \mathfrak{p}^{e_i}_i, \quad\quad  \Nm(\mathfrak{p}_i) = p^{f_i},$$
    	for $i = 1, 2, \cdots, r$, then $D$ has $r$ orbits each of which is of length $e_if_i$ and decomposes into $f_i$ $I$-orbits of length $e_i$. 
    \end{lemma}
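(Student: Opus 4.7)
The plan is to translate the statement into the language of the $G$-action on the coset space $G/H$, where $G = \Gal(K^c/\Q)$ and $H = \Stab(\lambda_1)$ is the subgroup fixing the inclusion $K \hookrightarrow K^c$. Via the paper's identification of the $n$ embeddings with $G/H$ (with $G$-action by left multiplication), the claim about $D$- and $I$-orbits on embeddings becomes a purely group-theoretic claim about $D$- and $I$-orbits on $G/H$.

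First, I would fix a prime $\mathfrak{P}$ of $K^c$ above $p$ so that $D = D_{\mathfrak{P}}$ and $I = I_{\mathfrak{P}}$, and set up the standard double-coset parametrization of primes of $K$ above $p$. The assignment $\tilde{\sigma}H \mapsto \tilde{\sigma}^{-1}(\mathfrak{P}) \cap K$ identifies $D$-orbits on $G/H$ with primes of $K$ above $p$; the key inputs are that $G$ acts transitively on primes of $K^c$ above $p$ with stabilizer $D$, together with the fact that two primes of $K^c$ restrict to the same prime of $K$ iff they are $H$-conjugate. This accounts for exactly $r$ $D$-orbits, one for each $\mathfrak{p}_1, \dots, \mathfrak{p}_r$.

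Next, to compute the length of the $D$-orbit corresponding to $\mathfrak{p}_i$, I would pick a representative $\tilde{\sigma}_i H$, note that its stabilizer in $D$ is $D \cap \tilde{\sigma}_i H \tilde{\sigma}_i^{-1}$, and identify this intersection with the decomposition group of $\mathfrak{P}$ in the extension $K^c / \tilde{\sigma}_i(K)$. Its order is $e(\mathfrak{P}/\mathfrak{p}'_i)\, f(\mathfrak{P}/\mathfrak{p}'_i)$, where $\mathfrak{p}'_i := \tilde{\sigma}_i(K) \cap \mathfrak{P}$; combined with $|D| = e(\mathfrak{P}/p) f(\mathfrak{P}/p)$ and multiplicativity of $e$ and $f$ in the tower $\Q \subset \tilde{\sigma}_i(K) \subset K^c$, the orbit length is $e_i f_i$. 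Running the same argument with $I$ in place of $D$, the stabilizer $I \cap \tilde{\sigma}_i H \tilde{\sigma}_i^{-1}$ is the inertia of $\mathfrak{P}$ over $\mathfrak{p}'_i$, of order $e(\mathfrak{P}/\mathfrak{p}'_i) = |I|/e_i$, so each $I$-orbit inside the given $D$-orbit has length $e_i$; dividing $e_i f_i$ by $e_i$ gives the stated $f_i$ such $I$-orbits.

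The only real subtlety is bookkeeping: carefully identifying the abstract subgroup intersections $D \cap \tilde{\sigma}_i H \tilde{\sigma}_i^{-1}$ and $I \cap \tilde{\sigma}_i H \tilde{\sigma}_i^{-1}$ with the correct local Galois groups at the appropriate prime of the conjugate subfield $\tilde{\sigma}_i(K)$, and keeping the correspondence between global primes and their local completions straight across the conjugations. Beyond that, the proof is an immediate consequence of transitivity of Galois actions on primes above $p$ together with multiplicativity of ramification indices and residue degrees in towers.
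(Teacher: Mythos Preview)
Your argument is correct and is the standard approach to this classical result. The paper does not actually supply a proof; it merely cites \cite{Japanese, Neukirch, Melanie}, and your sketch is essentially the argument one finds in those references (in particular in Neukirch and in Wood's note). The identification of $D$-orbits on $G/H$ with primes of $K$ above $p$ via $\tilde{\sigma}H \mapsto \tilde{\sigma}^{-1}(\mathfrak{P}) \cap K$, and the computation of orbit lengths via $D \cap \tilde{\sigma}_i H \tilde{\sigma}_i^{-1}$ and $I \cap \tilde{\sigma}_i H \tilde{\sigma}_i^{-1}$ as decomposition and inertia groups for $K^c/\tilde{\sigma}_i(K)$, is exactly right.

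One small point worth making explicit in a final write-up: to conclude that the $D$-orbit of size $e_i f_i$ breaks into $f_i$ $I$-orbits each of size $e_i$ (rather than $I$-orbits of possibly varying sizes summing to $e_i f_i$), you use that $I$ is normal in $D$, so that conjugating the $I$-stabilizer of $\tilde{\sigma}_i H$ by any $d \in D$ gives the $I$-stabilizer of $d\tilde{\sigma}_i H$, and these all have the same order. You allude to this implicitly by computing a single $I$-orbit length and then ``dividing $e_i f_i$ by $e_i$,'' but it is worth saying why the division is legitimate.
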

    
   Notice that if $p$ is tamely ramified in $K$, then by \cite[Theorem III.2.6]{Neukirch} we have 
   \[v_p(\Disc(K)) = \sum_i f_i(e_i-1) = \sum_{i} e_i f_i  - \sum_i f_i = n -\sum_i f_i,\] where by the
    above lemma $\sum_i f_i$ is the number of orbits of the inertia group $I_p$ acting on the right cosets.
  And since $I_p =\langle g \rangle$ is generated by a single element when $p$ is tamely ramified, the orbits of the inertia group $I_p$ are exactly the orbits of $g\in \Gal(K^c/\Q) \subset S_n$.     Following \cite{Mall1} we define
    $$\ind(g):=n- |g| , \ g\in S_n,$$
    where $|g|$ is the number of cycles of $g$ as a permutation element in $S_n$. So we have the following well-known lemma, see for example \cite{Mall1, Melanie}.
    
    \begin{lemma}\label{lem:disc-index}
    Given a number field $K$ of degree $n$ that is tamely ramified at $p$, we have
    $$v_p(\Disc(K)) = \ind(g),$$
    where $I_p = \langle g \rangle \subset \Gal(K^c/\Q)\subset S_n$ is the inertia group of $K^c$. 
    \end{lemma}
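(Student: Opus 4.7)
The plan is to combine the preceding discriminant formula for tame ramification with the classical lemma describing the decomposition and inertia group orbits, noting that tame ramification forces the inertia group to be cyclic.

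First, I would recall from the discussion immediately preceding the lemma that for $p$ tamely ramified in $K$, one has the formula
\[
v_p(\Disc(K)) = \sum_{i=1}^r f_i(e_i - 1) = n - \sum_{i=1}^r f_i,
\]
which is a direct application of \cite[Theorem III.2.6]{Neukirch} combined with $\sum_i e_i f_i = n$. The goal is therefore to identify $\sum_i f_i$ with the number of cycles $|g|$ of the generator $g$ of $I_p$ as a permutation in $S_n$.

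Next, I would invoke the classical lemma (the one stated just above in the excerpt) which says that, via the permutation action on the $n$ right cosets of $H = \Stab(\lambda_1)$ in $\Gal(K^c/\Q)$, the decomposition group $D$ acts with $r$ orbits, and the $i$-th $D$-orbit breaks up into $f_i$ $I_p$-orbits of length $e_i$. Summing over $i$, the total number of $I_p$-orbits on the set of $n$ cosets equals $\sum_i f_i$.

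The final step uses that tame ramification implies $I_p$ is cyclic (see \cite[Theorem III.2.6]{Neukirch}). Writing $I_p = \langle g \rangle$, the $I_p$-orbits on $\{1,\dots,n\}$ are precisely the cycles of $g$ viewed as an element of $S_n$, so their number equals $|g|$. Combining the three displays gives
\[
v_p(\Disc(K)) = n - \sum_{i=1}^r f_i = n - |g| = \ind(g),
\]
which is the desired equality.

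There is no real obstacle here: the lemma is essentially a bookkeeping consequence of facts already collected in the section. The only point that warrants care is making sure the permutation action we use on $\Gal(K^c/\Q)$ matches the one in the definition of $\ind$, i.e.\ the action on cosets of $H$ (equivalently, on the embeddings $\lambda_i$), so that ``cycles of $g$ as an element of $S_n$'' really agrees with ``$I_p$-orbits'' in the classical lemma. Once this identification is made explicit, the computation is immediate.
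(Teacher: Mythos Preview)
Your proposal is correct and follows exactly the argument the paper gives in the paragraph immediately preceding the lemma: compute $v_p(\Disc(K)) = n - \sum_i f_i$ via Neukirch, identify $\sum_i f_i$ with the number of $I_p$-orbits using the classical lemma, and then use cyclicity of $I_p$ in the tame case to equate this with $n - |g| = \ind(g)$. There is nothing to add.
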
    
    This gives a very convenient combinatorial description to study the discriminant at tamely ramified primes. 
    
    Now given a pair of number fields $(F, K)$ that are linearly disjoint with degrees $m$ and $n$ respectively, so that $[FK:\Q] = [F:\Q][K:\Q] = mn$, then we can study the discriminant of $FK/\Q$ at tamely ramified primes by the above approach. By linear disjointness, the embeddings of $FK\to \bar{\Q}$ are in bijection with pairs of embeddings $(\lambda_i, \delta_j)$ where $\lambda_i: F\to \bar{\Q}$ and $\delta_j: K\to \bar{\Q}$ for $1\le i\le m$ and $1\le j\le n$ are embedding of $F$ and $K$ respectively. So we get a permutation structure of $G_{\Q} \to S_m\times S_n\hookrightarrow S_{mn}$ by its action on $(\lambda_i, \delta_j)$. Therefore we get
    $\Gal(K^cF^c/\Q)\subset \Gal(F^c/\Q)\times \Gal(K^c/\Q)$. 
    
    Then Lemma \ref{lem:disc-index} applies to the compositum $FK$ as well, with
    \[
    v_p(\Disc(FK)) = \ind(g, h),
    \]
    where $g$ and $h$ generate the inertia groups of $F^c$ and $K^c$ respectively, and where we write
    $\ind(g, h)$ for the index of $(g, h)$ with respect to the above permutation representation in $S_{mn}$.
    
    The following lemma gives a general description of the discriminant of $KF$ for an arbitrary pair of linearly disjoint $(K, F)$ based on the permutation structure of $\Gal(K^cF^c/\Q)$. 
    The statement incorporates several lemmas from 
    \cite[Section 2]{W21}, and here we provide a combined statement for the convenience of the reader. 
   
    \begin{lemma}\label{lem:Delta}
    Given  linearly disjoint number fields $K$ and $F$ of degrees $m$ and $n$, denote
    \begin{equation}\label{eq:def_Delta}\Delta_p(K, F) = \frac{\Disc_p(K)^n \Disc_p(F)^m}{\Disc_p(KF)}     \end{equation}
    with $\Disc_p(\cdot) := p^{v_p(\Disc(\cdot))}$.
    Then the following are true:
    \begin{itemize}
        	\item
    	The function $\Delta_p(K, F)$ depends only on the local \'etale algebras $K\otimes_{\Q} \Q_p$ and $F\otimes_{\Q} \Q_p$.
        \item
        The function $\Delta_p(K, F)$ is an integer, with $\Delta_p(K, F) = 1$ if $p$ is unramified in $K$ or $F$.
    	\item 
    	At $p \nmid [(KF)^c:\Q]$, so that $K$ and $F$ are unramified or tamely ramified
	with $I_p(K) = \langle g \rangle \subset S_m$ and $I_p(F) = \langle h \rangle \subset S_n$, define
    	\begin{equation}\label{eq:def_Delta2}
	 \Delta(g, h) := v_p(\Delta_p(K, F)) = n\cdot \ind(g) + m\cdot \ind(h) - \ind(g, h).
	 \end{equation}
	 Then we have
	\begin{align*}
	\Delta(g, h) = & \ n \sum_i \left( |c_i| - 1\right) + m \sum_j \left( |d_j| - 1\right) -\big( mn -\sum_{i,j} \gcd(|c_i|, |d_j|)\big),
	\end{align*}
    	where $g = \prod_{i} c_i$ and $h = \prod_j d_j$ are the cycle structures. 
	\item
	We have $\prod_{p \mid [(KF)^c:\Q]} \Delta_p(K, F) = O_{n, m}(1)$.
    \end{itemize}    
    \end{lemma}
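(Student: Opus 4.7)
I would prove the four parts in order, using standard tools from algebraic number theory and a combinatorial identity on cycle types. For (1) and (2), the key observation is that the $p$-part of the discriminant of any number field $L$ is determined by the \'etale algebra $L\otimes_\Q\Q_p$, and that linear disjointness over $\Q$ gives $(KF)\otimes_\Q\Q_p \cong (K\otimes_\Q\Q_p)\otimes_{\Q_p}(F\otimes_\Q\Q_p)$, from which (1) follows. For integrality, my plan is to apply the discriminant tower $\Disc(KF) = \Disc(K)^n\cdot N_{K/\Q}(\mathcal{D}_{KF/K})$ together with the base-change inclusion $\mathcal{O}_F\otimes_\Z\mathcal{O}_K\subseteq\mathcal{O}_{KF}$, which implies $\mathcal{D}_{KF/K}\mid\Disc(F)\mathcal{O}_K$; taking norms to $\Z$ yields $\Disc(KF)\mid\Disc(K)^n\Disc(F)^m$, so $\Delta_p(K,F)\in\Z_{\geq 0}$. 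When $p$ is unramified in $F$, the right-hand divisor is prime to $p$ and forces $\Delta_p(K,F)=1$; the $K$-unramified case follows symmetrically.

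For (3), under the tame hypothesis $p\nmid[(KF)^c:\Q]$, Lemma \ref{lem:disc-index} applies simultaneously to $K$, $F$, and $KF$, yielding $v_p(\Disc(K))=\ind(g)$, $v_p(\Disc(F))=\ind(h)$, and $v_p(\Disc(KF))=\ind(g,h)$; substituting into \eqref{eq:def_Delta} produces the first identity of \eqref{eq:def_Delta2}. To derive the cycle-structure formula I would compute the cycle decomposition of $(g,h)$ acting coordinatewise on $\{1,\dots,m\}\times\{1,\dots,n\}$: a pair of cycles $c_i$ of $g$ and $d_j$ of $h$ contributes $\gcd(|c_i|,|d_j|)$ orbits of length $\lcm(|c_i|,|d_j|)$, so the total number of cycles of $(g,h)$ equals $\sum_{i,j}\gcd(|c_i|,|d_j|)$ and $\ind(g,h)=mn-\sum_{i,j}\gcd(|c_i|,|d_j|)$. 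Combined with $\ind(g)=\sum_i(|c_i|-1)$ and $\ind(h)=\sum_j(|d_j|-1)$, this rearranges into the stated expression for $\Delta(g,h)$.

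For (4), since $\Gal(K^cF^c/\Q)$ embeds in $\Gal(K^c/\Q)\times\Gal(F^c/\Q)\hookrightarrow S_m\times S_n$, the integer $[(KF)^c:\Q]$ divides $m!\,n!$; hence the primes dividing it lie in a fixed finite set depending only on $m,n$ and are themselves bounded in size by $m!\,n!$. At each such prime, standard local bounds on the different of a $p$-adic extension (e.g.\ \cite[III.2]{Neukirch}) control $v_p(\Disc(K))$, $v_p(\Disc(F))$, and $v_p(\Disc(KF))$ by constants depending only on $m,n$, whence $\Delta_p(K,F)=O_{m,n}(1)$. Multiplying a bounded number of bounded factors gives $\prod_{p\mid[(KF)^c:\Q]}\Delta_p(K,F)=O_{m,n}(1)$. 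The step I expect to be the main subtlety is precisely this uniformity in (4): the bound on the wild contribution must depend only on the degrees $m$ and $n$, not on the particular pair $(K,F)$ or on the precise shape of $\Gal(K^cF^c/\Q)$. Both required inputs—the divisibility $[(KF)^c:\Q]\mid m!\,n!$ and uniform local discriminant estimates—are classical, so the obstacle is one of careful bookkeeping rather than of depth.
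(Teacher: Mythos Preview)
Your proposal is correct and follows the natural line of argument. Note that the paper itself does not supply a proof of this lemma: it simply records the statement as a consolidation of several lemmas from \cite[Section~2]{W17} and refers the reader there, so there is no ``paper's own proof'' to compare against beyond that citation. Your treatment of (1)--(3) matches the standard argument (and the discussion preceding the lemma in the paper already sets up the key identity $v_p(\Disc(KF))=\ind(g,h)$ via the cycle count $\sum_{i,j}\gcd(|c_i|,|d_j|)$), and your handling of (4) via $[(KF)^c:\Q]\mid m!\,n!$ together with the uniform local bound on the different is exactly the intended mechanism.
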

    
    \begin{remark}
        As another (equivalent) way to compute $\Delta_p(K, F)$, note that if
    	\begin{align*}
v_p(\Disc(F)) = \sum_{i = 1}^{g_F} f_i (e_i - 1), \ \ \ v_p(\Disc(K)) = \sum_{j = 1}^{g_k} f_j (e_j - 1),
\end{align*}
where the $(e_i, f_i)$ and $(e_j, f_j)$ are the ramification and inertial degrees of the prime ideals of $K$ and $F$ above $p$ respectively, 
then for $p \nmid mn$ we have
\begin{align*}
v_p(\Disc(FK)) = \sum_{i = 1}^{g_F} \sum_{j = 1}^{g_k} f_i f_j \gcd(e_i, e_j) (\textnormal{lcm}(e_i, e_j) - 1).
\end{align*}

    \end{remark}
    
    Finally, we prove that pairs $(F, K)$ are in bijection with their composita:
    
    \begin{lemma}\label{lem:composita}
    	Given arbitrary $d>2$ and any abelian group $A$, the map $\phi : (F, K) \mapsto FK$ induces a bijection between
    	$$\{ (F, K) \mid \Gal(F^c/\Q)= S_d,[F:\Q]=d, \Gal(K/\Q)= A, \Gal(F^cK/\Q)=S_d\times A\subset S_{d|A|} \}$$
    	and
    	$$\{  L \mid [L:\Q]=d|A|, \Gal(L^c/\Q)=S_d\times A\subset S_{d|A|} \}.$$
    \end{lemma}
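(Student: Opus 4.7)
The plan is to analyze the bijection via the subgroup lattice of $G := \Gal(L^c/\Q) \simeq S_d \times A$ acting on embeddings. For well-definedness of $\phi$, the condition $\Gal(F^c K/\Q) = S_d \times A$ (of order $d!\cdot|A|$) forces the natural injection $\Gal(F^c K/\Q) \hookrightarrow \Gal(F^c/\Q) \times \Gal(K/\Q)$ to be an isomorphism, so $F^c$ and $K$ are linearly disjoint over $\Q$; hence $[FK:\Q] = d|A|$ and $(FK)^c = F^c K$. The permutation action of $\Gal((FK)^c/\Q)$ on the $d|A|$ embeddings of $FK$ -- which are products of embeddings of $F$ with embeddings of $K$ -- factors as the product of the natural $S_d$-action on the $d$ embeddings of $F$ and the regular $A$-action on the $|A|$ embeddings of $K$, realizing $S_d \times A \subset S_{d|A|}$ as required.

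For surjectivity, given $L$ on the right-hand side, I identify $G = \Gal(L^c/\Q)$ with $S_d \times A \subset S_{d|A|}$ so that the point stabilizer $H_L$ equals $S_{d-1} \times \{1\}$, where $S_{d-1}$ is the stabilizer of the first coordinate under the natural $S_d$-action. I then define
\[
K := (L^c)^{S_d \times \{1\}}, \qquad F := (L^c)^{S_{d-1} \times A}.
\]
Straightforward computations give $[K:\Q] = |A|$ and $\Gal(K/\Q) = G/(S_d \times \{1\}) \cong A$; while $[F:\Q] = d$, and $F^c = (L^c)^{\{1\} \times A}$ (the normal core of $S_{d-1} \times A$ in $G$), so $\Gal(F^c/\Q) = G/(\{1\} \times A) \cong S_d$. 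Finally $FK$ corresponds to $(S_{d-1} \times A) \cap (S_d \times \{1\}) = S_{d-1} \times \{1\} = H_L$, so $FK = L$, and $F^c K = L^c$ carries the prescribed permutation structure.

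For injectivity, suppose $(F_1, K_1)$ and $(F_2, K_2)$ both map to $L$; it suffices to show $H_{K_i} = S_d \times \{1\}$ and $H_{F_i} = S_{d-1} \times A$ inside $G$. Since $K_i \subseteq L$, one has $H_{K_i} \supseteq H_L = S_{d-1} \times \{1\}$, and $H_{K_i} \lhd G$ because $K_i/\Q$ is Galois. For $d \geq 3$, the conjugates of $S_{d-1}$ generate $S_d$, so the normal closure of $S_{d-1} \times \{1\}$ in $G$ equals $S_d \times \{1\}$, forcing $H_{K_i} = S_d \times A''$ for some $A'' \subseteq A$; and the abstract isomorphism $G/H_{K_i} = A/A'' \cong A$ compels $A'' = \{1\}$. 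For the $F_i$, the intersection $M := H_{F_i} \cap (S_d \times \{1\})$ is a subgroup of $S_d$ containing $S_{d-1}$, hence equals $S_{d-1}$ or $S_d$. The case $M = S_d$ gives $H_{F_i} = S_d \times A'$ (with $|A'| = |A|/d$) normal in $G$ with abelian quotient $A/A'$, contradicting $\Gal(F_i^c/\Q) = S_d$; the case $M = S_{d-1}$, combined with $N_{S_d}(S_{d-1}) = S_{d-1}$, forces any coset representative $(\sigma_a, a) \in H_{F_i}$ to normalize $S_{d-1}$ and so to satisfy $\sigma_a \in S_{d-1}$, whence $H_{F_i} \subseteq S_{d-1} \times A$ and equality holds by order. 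The subtle point to watch for is ruling out ``twisted'' normal subgroups of $G$ isomorphic to $S_d$ with quotient $A$, such as $\{(\sigma, a^{\mathrm{sgn}(\sigma)}) : \sigma \in S_d\}$ for $a \in A[2]$; these do not contain $H_L = S_{d-1} \times \{1\}$ (as $S_{d-1} \not\subseteq A_d$) and so are automatically excluded by the constraint $K_i \subseteq L$.
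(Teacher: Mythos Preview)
Your argument is correct. Well-definedness, surjectivity, and the uniqueness of $K$ match the paper's proof closely (the paper phrases the last of these as ``the kernel of any surjection $S_d\times A\to A$ containing $S_{d-1}\times e$ must be $S_d\times e$'', which is equivalent to your normal-closure computation).

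The genuine difference is in the uniqueness of $F$. The paper argues globally: any surjection $f\colon S_d\times A\to S_d$ must send $\{e\}\times A$ into the center of $S_d$, which is trivial, so $\ker f=\{e\}\times A$; this pins down $F^c$ uniquely inside $L^c$ and hence $F$ up to isomorphism. You instead argue locally inside $L$: from $F_i\subseteq L$ you get $H_{F_i}\supseteq S_{d-1}\times\{1\}$, and since $S_d\times\{1\}\lhd G$ the intersection $H_{F_i}\cap(S_d\times\{1\})$ is normal in $H_{F_i}$; the self-normalizing property $N_{S_d}(S_{d-1})=S_{d-1}$ then forces $H_{F_i}\subseteq S_{d-1}\times A$. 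Your route yields a slightly sharper conclusion (literal uniqueness of $F$ as a subfield of $L$, not merely of its isomorphism class in $L^c$) and makes transparent why the twisted copies of $S_d$ you flag are irrelevant. The paper's route is shorter and more structural, exploiting that $Z(S_d)=1$ rather than maximality and self-normalization of $S_{d-1}$. Both are clean; yours is arguably the more natural Galois-correspondence argument.

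One small presentational remark: in the case $M=S_d$ you write $|A'|=|A|/d$, which tacitly assumes $d\mid|A|$; when $d\nmid|A|$ the case is vacuous by a cardinality count, so the argument stands, but it would read more smoothly to note this.
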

    \begin{proof}
    Firstly, we show $\phi$ is surjective. For each $L$ with $\Gal(L^c/\mathbb{Q}) = S_d\times A\subset S_{d|A|}$, we can write $L = (L^c)^H$ where $H = S_{d-1}\times e$. We define the inverse function $\psi (L) = (F, K)=( (L^c)^{S_{d-1}\times A}, (L^c)^{S_d\times e})$.
    
    Next, we show $\phi$ is injective. It suffices to show that there is a unique subfield $F$ of $L$ with $\Gal(F^c/\mathbb{Q})= S_d$ and a unique subfield $K$ with $\Gal(K/\Q) = A$. Given $H = S_{d-1}\times e$, we claim that $H_2 = S_d\times e$ is the only subgroup containing $H$ with $G/H_2 \simeq A$.
    Indeed, the kernel of any surjection $S_d \times A \to A$ must contain $A_d \times e$. If this kernel
    also contains $H$, then it must contain $H_2$ and hence equal $H_2$ as claimed. 
         This gives the uniqueness of $K$. 
    
    On the other hand, we claim that the only surjection $f: S_d\times A\to S_d$ is $\text{id}\times e$. 
   This shows that only one $S_d$ Galois extension exists in $L^c$, thus that only one isomorphism class of $S_d$ degree $d$ subextensions exists in $L^c$. 
    Denote $M_1:= f(S_d\times e)$ and $M_2:=f(e\times A)$ to be subgroups of $S_d$; then $M_1\cdot M_2 = S_d$ since $f$ is surjective, and $M_1$ commutes 
    with $M_2$ since $S_d\times e$ commutes with $e\times A$ in $S_d\times A$. Therefore $M_2$ is contained in the center of $S_d$, which is trivial, so $M_1 = S_d$.   	

    \end{proof}

\section{Index computations and Hypothesis \ref{A3}}\label{sec:A3}
In this section we study the index function on $S_d \times A$, and establish that Hypothesis
\ref{A3} holds for $d = 3, 4, 5$ and all $A$.

\begin{lemma}\label{lem:index-compare}
	For any $(g, h) \in G\times H$, we have
	$$\ind(g, e) = \ind(g)|H| \le \ind(g, h).$$
	Equality holds if and only if
	$$ \ord(h) \mid \gcd_i(|c_i|),$$
	where $g = \prod_{i} c_i$ is the disjoint union of cycles $c_i$.
\end{lemma}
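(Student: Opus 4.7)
The plan is to reduce everything to an explicit cycle-counting computation on the product action $G \times H \subset S_{mn}$, where $H = A$ acts on itself via the regular representation (so $n = |H|$). I would use the identity derived from Lemma \ref{lem:Delta}: writing $g = \prod_i c_i$ and $h = \prod_j d_j$ as disjoint cycle products, the cycles of $(g,h)$ acting on $\{1,\dots,m\} \times \{1,\dots,n\}$ decompose so that each cycle pair $(c_i, d_j)$ contributes $\gcd(|c_i|,|d_j|)$ cycles of length $\mathrm{lcm}(|c_i|,|d_j|)$; hence
\[
\mathrm{ind}(g,h) = mn - \sum_{i,j}\gcd(|c_i|,|d_j|).
\]
(Equivalently, one rearranges the formula $\Delta(g,h) = n\cdot\mathrm{ind}(g) + m\cdot\mathrm{ind}(h) - \mathrm{ind}(g,h)$ from Lemma \ref{lem:Delta}.)

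Next I would apply this with $h = e$. Since every cycle of the identity has length $1$, we get $\gcd(|c_i|,1) = 1$ for all $i,j$, so the sum becomes $k\,|H|$, where $k$ is the total number of cycles of $g$. Thus
\[
\mathrm{ind}(g,e) = m|H| - k|H| = (m-k)|H| = \mathrm{ind}(g)\,|H|,
\]
which is the first equality of the lemma.

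For the inequality, I would use that in the regular representation every cycle of $h$ has length exactly $\mathrm{ord}(h)$, and there are $|H|/\mathrm{ord}(h)$ such cycles. Substituting this into the cycle-count formula yields
\[
\mathrm{ind}(g,h) = m|H| - \frac{|H|}{\mathrm{ord}(h)}\sum_i \gcd(|c_i|,\mathrm{ord}(h)).
\]
Since each $\gcd(|c_i|,\mathrm{ord}(h)) \leq \mathrm{ord}(h)$ and there are $k$ cycles, the sum is at most $k\,\mathrm{ord}(h)$, so
\[
\mathrm{ind}(g,h) \geq m|H| - k|H| = \mathrm{ind}(g)\,|H| = \mathrm{ind}(g,e).
\]

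Finally, equality holds iff every term in the sum saturates its upper bound, i.e.\ $\gcd(|c_i|,\mathrm{ord}(h)) = \mathrm{ord}(h)$ for every $i$, which is equivalent to $\mathrm{ord}(h) \mid |c_i|$ for all $i$, and hence to $\mathrm{ord}(h) \mid \gcd_i(|c_i|)$. There is no real obstacle here; the only subtlety worth flagging is that the statement implicitly uses the regular representation for $H$ (so that all cycles of $h$ have the same length $\mathrm{ord}(h)$)—without that hypothesis the clean divisibility criterion would have to be replaced by a condition on each individual cycle length of $h$.
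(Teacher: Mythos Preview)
Your argument is correct and follows essentially the same route as the paper: both proofs rest on the cycle-count identity $\ind(g,h)=mn-\sum_{i,j}\gcd(|c_i|,|d_j|)$ and then bound the sum term by term. The one difference worth noting is that your specialization to the regular representation of $H$ is unnecessary. The paper simply observes that the number of cycles of $(g,e)$ can be rewritten as $\sum_{i,j}|d_j|$ (since $\sum_j|d_j|=n$ regardless of $h$), and then compares $\gcd(|c_i|,|d_j|)\le |d_j|$ directly; equality holds iff $|d_j|\mid |c_i|$ for all $i,j$, which is equivalent to $\ord(h)=\operatorname{lcm}_j|d_j|$ dividing each $|c_i|$. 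So the clean divisibility criterion $\ord(h)\mid\gcd_i|c_i|$ already holds for arbitrary $H\subset S_n$, and your closing caveat about needing the regular representation can be dropped.
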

\begin{proof}
Write $g = \prod_i c_i$ and $h = \prod_j d_j$ as disjoint unions of cycles. Then, 
	the number of cycles in $(g, h)$ is
	$$\sum_{i,j } \gcd (|c_i|, |d_j|),$$
	and the number of cycles in $(g, e)$ is
	$$\sum_{i,j } |d_j|.$$
	The first inequality follows immediately. We obtain equality if and only if $|d_j| \mid |c_i|$ for every $i$ and
	$j$, or equivalently if $\ord(h) \mid |c_i|$ for every $i$, yielding the second statement.
	
\end{proof}
We now turn our attention to \eqref{eq:index_bound}.
Given the definition 
$\Delta(g, h) = \ind(g)\cdot |A| + d\cdot \ind(h) -\ind(g, h)$, the following lemma is an immediate consequence of Lemma \ref{lem:index-compare}.

\begin{lemma}\label{lem:inequ}
Let $(g, h)$ be any nonidentity element of $S_d\times A$. Then, we have
\begin{equation}\label{eq:inequ}
\ind(g) - \frac{\ind(g, h)}{|A|}
= 
\frac{\Delta(g, h)}{|A|}  - \frac{\ind(h)\cdot d}{|A|} \leq 0.
\end{equation}
Moreover, writing  $g= \prod_{i} c_i$, equality holds in \eqref{eq:inequ} if and only if $\ord(h)| \gcd_i( |c_i|)$.

In particular, for $d = 3, 4, 5$, equality can only occur for $h = e$ or in the following cases:
\begin{itemize}
\item
For $d = 3$, $g = (123)\in S_3$ (up to conjugacy) and $3 \mid |A|$;
\item
For $d = 4$, $g = (12)(34), (1234)\in S_4$ (up to conjugacy) and $2 \mid |A|$;
\item
For $d = 5$, $g = (12345) \in S_5$ (up to conjugacy) and $5 \mid |A|$.
\end{itemize}
\end{lemma}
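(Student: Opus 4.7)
The plan is to obtain the equality displayed in \eqref{eq:inequ} by a purely algebraic rearrangement of the identity $\Delta(g,h) = \ind(g)\cdot |A| + d\cdot \ind(h) - \ind(g,h)$ recorded in Lemma \ref{lem:Delta}. Dividing by $|A|$ and solving for $\ind(g) - \ind(g,h)/|A|$ yields precisely the middle expression of \eqref{eq:inequ}, so this half of the statement requires no further input.

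For the inequality itself and its equality criterion, I would invoke Lemma \ref{lem:index-compare} with $H = A$: that result gives $\ind(g)\cdot |A| \leq \ind(g,h)$, with equality if and only if $\ord(h) \mid \gcd_i(|c_i|)$. Dividing through by $|A|$ turns this into $\ind(g) - \ind(g,h)/|A| \leq 0$ with the same equality condition, settling the general statement. Note that the case $h = e$ automatically produces equality, since $\ind(e) = 0$ and $\ind(g,e) = \ind(g) \cdot |A|$.

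Finally, I would translate the divisibility $\ord(h) \mid \gcd_i(|c_i|)$ into the concrete lists for $d = 3, 4, 5$ by enumerating the nontrivial conjugacy classes of $S_d$ and reading off $\gcd_i(|c_i|)$ from each cycle type. Any cycle type containing a fixed point, and every transposition in $S_3$, yields gcd $1$, so for such $g$ only $h = e$ is admissible. The surviving cases are the $3$-cycle in $S_3$ (gcd $3$), the double transposition $(12)(34)$ and the $4$-cycle in $S_4$ (gcd $2$ and $4$, respectively), and the $5$-cycle in $S_5$ (gcd $5$); demanding that $A$ contain a nontrivial element whose order divides the relevant gcd reproduces the three bulleted cases. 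The only real obstacle here is bookkeeping, namely verifying that no conjugacy class of $S_3$, $S_4$, or $S_5$ has been missed; there is no analytic or structural content beyond the direct application of Lemma \ref{lem:index-compare}.
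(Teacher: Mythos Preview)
Your proposal is correct and matches the paper's approach exactly: the paper simply states that the lemma is an immediate consequence of Lemma~\ref{lem:index-compare} together with the definition of $\Delta(g,h)$, which is precisely the rearrangement-plus-invocation you describe. Your case enumeration for $d=3,4,5$ spells out the bookkeeping the paper leaves implicit, and it is accurate.
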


Note that when $\ind(g) - \frac{\ind(g, h)}{|A|}$ is nonzero, it is necessarily bounded above by $- \frac{1}{|A|}$.

\subsection{Examples}
To help the reader visualize the results, we present an explicit computation of $\Delta(g, h)$ in the following five cases:
$G \times A = S_3 \times C_2$; $S_3 \times C_3$; $S_4 \times C_2$; $S_5 \times C_2$; $S_5 \times C_5$. 

For each prime $p$ tamely ramified in $FK$ (so, in particular, each $p > 5$), the following tables present the following data:
\begin{itemize}

\item The possible ramified  ``splitting types'' of $p$ in $\mathcal{O}_F$, listing the inertial degree and ramification index
of each of the primes $\frak{p}$ of $\mathcal{O}_F$ over $p$. 

The formatting and list is as in \cite[Lemma 20]{HCL4}; for example, writing that $p$ has splitting type $(1^2 12)$ means that
$p \mathcal{O}_F = \frak{p}_1^2 \frak{p}_2 \frak{p}_3$
for distinct primes $\frak{p}_1$, $\frak{p}_2$, and $\frak{p}_3$ of $F$ with inertial degrees $1$, $1$, and $2$ respectively.

\item The splitting type of $p$ in $FK$, assuming that $p$ has the designated splitting type in $F$ and ramifies in $K$.

\item A generator $g$ of the inertia group $I_{F, p} \subseteq S_d$ at $p$, well defined up to conjugacy.
(In the examples here $A$ is cyclic of prime order; since we assume that $p$ ramifies in $K$, the corresponding inertia group will be all of $A$.)

\item The $p$-adic valuation of $\Disc(F)$, which can be computed from Lemma \ref{lem:disc-index} or
from the splitting type. 

\item The $p$-adic valuation of $\Disc(FK)$, computed in the same way.

\item The ``discrepancy'' $\Delta(g, h)$, defined in general in Lemma \ref{lem:Delta}, and here equal to
\begin{align}\label{discrepancy}
\Delta(g, h) = |A| \cdot v_p(\Disc(F)) + d \cdot v_p(\Disc(K)) - v_p(\Disc(FK)).
\end{align}
Recall that in Lemma \ref{lem:inequ} we proved that $\Delta(g, h) \leq \ind(h) \cdot d$.
\end{itemize}

\begin{table}[h!]\caption{Discriminant valuations for $S_3 \times C_2$ ($\Delta(g, h) \leq 3$)}
\begin{center}
\begin{tabular}{c|c|c|c|c|c}
Splitting type &  Splitting in $FK$ & Generator $g$ & $v_p(\Disc(F))$ & $v_p(\Disc(FK))$ & $\Delta(g, h)$  \\ \hline
$(1^2 1)$ & $(1^2 1^2 1^2)$, $(1^2 1^2 1^2)$ & $(12)$ & $1$ & $3$ & $2$  \\ 
$(1^3)$ & $(1^6)$ & $(123)$ & $2$ & $5$ & $2$ \\ 
\end{tabular}
\end{center}
\end{table}

\begin{table}[h!]\caption{Discriminant valuations for $S_3 \times C_3$ ($\Delta(g, h) \leq 6$)}
\begin{center}
\begin{tabular}{c|c|c|c|c|c}
Splitting type &  Splitting in $FK$ & Generator $g$ & $v_p(\Disc(F))$ & $v_p(\Disc(FK))$ & $\Delta(g, h)$  \\ \hline
$(1^2 1)$ & $(1^6 1^3)$ & $(12)$ & $1$ & $7$ & $2$  \\ 
$(1^3)$ & $(1^3 1^3 1^3)$ & $(123)$ & $2$ & $6$ & $6$ \\ 
\end{tabular}
\end{center}
\end{table}

\begin{table}[h!]\caption{Discriminant valuations for $S_4 \times C_2$ ($\Delta(g, h) \leq 4$)}
\begin{center}
\begin{tabular}{c|c|c|c|c|c}
Splitting type &  Splitting in $FK$ & Generator $g$ & $v_p(\Disc(F))$ & $v_p(\Disc(FK))$ & $\Delta(g, h)$   \\ \hline
$(1^2 11)$, $(1^2 2)$ & $(1^2 1^2 1^2 1^2)$, $(1^2 1^2 2^2)$ & $(12)$ & $1$ & $4$ & $2$  \\ 
$(1^2 1^2)$, $(2^2)$ & $(1^2 1^2 1^2 1^2)$, $(2^2 2^2)$ & $(12)(34)$ & $2$ & $4$ & $4$ \\ 
$(1^3 1)$ & $(1^6 1^2)$ & $(123)$ & $2$ & $6$ & $2$ \\ 
$(1^4)$ & $(1^4 1^4)$ & $(1234)$ & $3$ & $6$ & $4$ \\ 
\end{tabular}
\end{center}
\end{table}
\begin{table}[h!]\caption{Discriminant valuations for $S_5 \times C_2$ ($\Delta(g, h) \leq 5$)}
\begin{center}
\begin{tabular}{c|c|c|c|c|c}
Splitting type &  Splitting in $FK$ & Generator $g$ & $\small{v_p(\Disc(F))}$ & $\small{v_p(\Disc(FK))}$ & $\Delta(g, h)$   \\ \hline
\scriptsize{$(1^2 111)$, $(1^2 12)$, $(1^2 3)$} & \scriptsize{$(1^2 1^2 1^2 1^2 1^2)$, $(1^2 1^2 1^2 2^2)$, $(1^2 1^2 3^2)$} & $(12)$ & $1$ & $5$ & $2$  \\ 
$(1^2 1^2 1)$, $(2^2 1)$ & $(1^2 1^2 1^2 1^2 1^2)$, $(2^2 2^2 1^2)$ & $(12)(34)$ & $2$ & $5$ & $4$ \\ 
$(1^3 11)$, $(1^3 2)$ & $(1^6 1^2 1^2)$, $(1^6 2^2)$ & $(123)$ & $2$ & $7$ & $2$ \\ 
$(1^3 1^2)$ & $(1^6 1^2 1^2)$ & $(123)(45)$ & $3$ & $7$ & $4$ \\ 
$(1^41)$ & $(1^4 1^4 1^2)$ & $(1234)$ & $3$ & $7$ & $4$ \\ 
$(1^5)$ & $(1^{10})$ & $(12345)$ & $4$ & $9$ & $4$ \\ 
\end{tabular}
\end{center}
\end{table}

\begin{table}[h!]\caption{Discriminant valuations for $S_5 \times C_5$ ($\Delta(g, h) \leq 20$)}
\begin{center}
\begin{tabular}{c|c|c|c|c|c}
Splitting type &  Splitting in $FK$ & Generator $g$ &  $\small{v_p(\Disc(F))}$ & $\small{v_p(\Disc(FK))}$  & $\Delta(g, h)$  \\ \hline
\scriptsize{$(1^2 111)$, $(1^2 12)$, $(1^2 3)$} & \scriptsize{$(1^{10} 1^5 1^5 1^5)$, $(1^{10} 1^5 1^5 2^5)$, $(1^5 1^5 3^5)$} & $(12)$ & $1$ & $21$ & $4$  \\ 
$(1^2 1^2 1)$, $(2^2 1)$ & $(1^{10} 1^{10} 1^5), (2^{10} 1^5)$ & $(12)(34)$ & $2$ & $22$ & $8$ \\ 
$(1^3 11)$, $(1^3 2)$ & $(1^{15} 1^5 1^5)$, $(1^{15} 2^5)$ & $(123)$ & $2$ & $22$ & $8$ \\ 
$(1^3 1^2)$ & $(1^{15} 1^{10})$ & $(123)(45)$ & $3$ & $23$ & $12$ \\ 
$(1^41)$ & $(1^{20} 1^5)$ & $(1234)$ & $3$ & $23$ & $12$ \\ 
$(1^5)$ & $(1^5 1^5 1^5 1^5 1^5)$ & $(12345)$ & $4$ & $20$ & $20$ \\ 
\end{tabular}
\end{center}
\end{table}

\subsection{Verification of Hypothesis \ref{A3} for $S_d$ with $d=3,4,5$}\label{verify}

We proceed now to the verification of Hypothesis \ref{A3}. 
We begin with two field counting lemmas. The first is a combination of
\cite[Theorem 1.3]{TT} and \cite[Theorems 4.1 and 5.1]{EPW}.
\begin{lemma}\label{lem:power_saving}
Let $d \in \{ 3, 4, 5 \} $. For squarefree $q$ we have

\begin{equation}\label{eq:quant}
 \# \{ F \in \mathcal{F}_{S_d} : \ |\Disc(F)| < X, \ q \mid \Disc(F) \} =
 C X \prod_{p \mid q} \left( p^{-1} + O(p^{-2}) \right) + O\left(
   X^{1 - \alpha} q^{\beta} \right),
 \end{equation}
 for constants $C > 0$, $\alpha > 0$ and $\beta > -1$ depending on $d$ (as does the implied constant).
\end{lemma}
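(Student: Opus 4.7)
The plan is to reduce to known power-saving counts for $S_d$-fields with prescribed local behavior at finitely many primes, and then carry out a sieve over splitting types at the primes $p \mid q$.

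First I would decompose the condition $q \mid \Disc(F)$ into local conditions. Since $q$ is squarefree, for each prime $p \mid q$ we must have $p \mid \Disc(F)$, which by Lemma \ref{lem:disc-index} (for tame $p$, i.e.\ $p > d$) is equivalent to demanding that the inertia generator $g \in S_d$ at $p$ be nontrivial; for the finitely many $p \leq d$ we simply allow any ramified local \'etale algebra. Thus
\[
 \# \{ F \in \mathcal{F}_{S_d} : |\Disc(F)| < X, \ q \mid \Disc(F) \}
 = \sum_{\Sigma} N_d(X, \Sigma_{S(q), d}),
\]
where the sum ranges over all choices $\Sigma$ of ramified \'etale $\mathbb{Q}_p$-algebras of dimension $d$ for each $p$ in the support $S(q)$ of $q$.

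Next I would invoke the quantitative power-saving counting theorems cited in the lemma: for $d = 3$ this is the Taniguchi--Thorne result \cite[Theorem 1.3]{TT}, and for $d = 4, 5$ it is \cite[Theorems 4.1 and 5.1]{EPW}. Each of these gives, for a finite set $S$ of primes and a choice $\Sigma$ of local \'etale algebras, an asymptotic of the form
\[
N_d(X, \Sigma_{S, d}) = c(\Sigma)\, X + O\bigl(X^{1 - \alpha_0}\,\mathrm{cond}(\Sigma)^{\beta_0}\bigr),
\]
with $c(\Sigma)$ a product of local densities and $\mathrm{cond}(\Sigma)$ a polynomial proxy for the ramification contributions at $p \in S$. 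Summing over all ramified $\Sigma$ at each $p \mid q$ produces, in the main term, a factor $c_p(\text{ramified}) = p^{-1} + O(p^{-2})$: the $p^{-1}$ comes from the density of $S_d$-fields ramified at $p$ (equivalently, the density of discriminants divisible by $p$), and the $O(p^{-2})$ collects the wildly and doubly ramified contributions. This yields the stated main term $C X \prod_{p \mid q}(p^{-1} + O(p^{-2}))$.

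For the error term, the number of ramified splitting types at each $p$ is bounded, and $\mathrm{cond}(\Sigma)^{\beta_0}$ multiplies to at most $q^{\beta_1}$ for some explicit $\beta_1$ depending on $d$. Choosing $\alpha < \alpha_0$ and $\beta > \beta_1$ (with $\beta > -1$, which is trivially achievable since $\beta_1 \geq 0$) absorbs the number-of-choices factor and yields $O(X^{1 - \alpha} q^\beta)$. The main obstacle is purely bookkeeping: tracking how the uniformity exponent $\beta_0$ from \cite{TT, EPW} interacts with the count of admissible local splitting types at each prime, and separating the wildly ramified primes $p \leq d$ (which contribute only an $O(1)$ factor). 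All quantitative content is already contained in the cited theorems, so no genuinely new estimate is required.
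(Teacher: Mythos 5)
Your proof takes essentially the same route as the paper: the paper simply asserts the lemma ``is a combination of \cite[Theorem 1.3]{TT} and \cite[Theorems 4.1 and 5.1]{EPW}'' without further elaboration, and your sketch fills in the bookkeeping (decompose $q \mid \Disc(F)$ into local ramification conditions at each $p \mid q$, apply the cited power-saving counts with local conditions, sum over the $O_d(1)$ ramified splitting types per prime to produce the main-term factor $p^{-1} + O(p^{-2})$ and to absorb the multiplicity of types into the error exponents). Your reasoning is correct and matches the paper's intent; the only caveat worth stating explicitly is that the $O(p^{-2})$ claim at the finitely many wild primes $p \leq d$ holds because the implied constant is permitted to depend on $d$, so the bounded local densities there can be rewritten in that form.
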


\begin{lemma}\label{lem:bound_smallQ}
When $Q_1 Q_2 \ll X^{1/40}$, we have
that
\[
\sum_{q_1 \sim Q_1}
\sum_{q_2 \sim Q_2} 
\# \{ F \in \mathcal{F}_{S_5}, \ |\Disc(F)| < X, \ q_1 q_2^2 \mid \Disc(F) \} \ll XQ_1^{\epsilon} Q_2^{-1 + \epsilon}.
\]
\end{lemma}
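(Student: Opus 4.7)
The plan is to exploit the fact that $q_1 q_2^2 \mid \Disc(F)$ forces $v_p(\Disc(F)) \geq 2$ at every prime $p \mid q_2$, which is a much stronger constraint than mere divisibility by $q_2$. By Lemma \ref{lem:disc-index} and the $S_5$ splitting tables, such a prime $p > 5$ must have inertia group conjugate to one of $\langle (12)(34)\rangle$, $\langle (123)\rangle$, $\langle (123)(45)\rangle$, $\langle (1234)\rangle$, or $\langle (12345)\rangle$; the local density contributed by any of these inertia types is $O(1/p^2)$, rather than the $O(1/p)$ density one gets from the type $(12)$ that accounts for $p \| \Disc(F)$.

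First, I would promote Lemma \ref{lem:power_saving} to a version that separates primes $p \mid q_1$ (where only $p \mid \Disc(F)$ is imposed) from primes $p \mid q_2$ (where $v_p(\Disc(F)) \geq 2$ is imposed). The asymptotic \eqref{eq:quant} is proved in \cite{TT,EPW} by enumerating inertia types in Bhargava's quintic parametrization \cite{Bha2}, and the argument localizes prime by prime. This yields, for coprime squarefree $q_1, q_2$ and constants $C, C' > 0$, $\alpha > 0$, $\beta > -1$ depending only on $d = 5$,
\begin{multline*}
\#\{F \in \mathcal{F}_{S_5} : |\Disc(F)| < X,\ q_1 \mid \Disc(F),\ v_p(\Disc(F)) \geq 2\ \forall p \mid q_2\} \\
= CX \prod_{p \mid q_1} \bigl(p^{-1} + O(p^{-2})\bigr) \prod_{p \mid q_2} \bigl(C' p^{-2} + O(p^{-3})\bigr) + O\bigl(X^{1-\alpha}(q_1 q_2)^{\beta}\bigr).
\end{multline*}
Since $q_1 q_2^2 \mid \Disc(F)$ implies the hypotheses on the left, this upper-bounds the inner count in Lemma \ref{lem:bound_smallQ}.

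Second, I would sum over $q_1 \sim Q_1$ and $q_2 \sim Q_2$, coprime and squarefree. The main term contributes
\[
\ll X \sum_{q_1 \sim Q_1} \frac{c^{\omega(q_1)}}{q_1} \sum_{q_2 \sim Q_2} \frac{c^{\omega(q_2)}}{q_2^2} \ll X (\log X)^{O(1)} Q_2^{-1} \ll X Q_1^{\epsilon} Q_2^{-1+\epsilon},
\]
using that $\sum_{q \sim Q, \text{sqfree}} q^{-1}$ is bounded and $\sum_{q \sim Q, \text{sqfree}} q^{-2} \asymp Q^{-1}$, with the divisor-type factors $c^{\omega(q)}$ absorbed into $X^{\epsilon}$. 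The error term sums to at most $X^{1-\alpha}(Q_1 Q_2)^{1+\beta}$; under the hypothesis $Q_1 Q_2 \ll X^{1/40}$ this is $\ll X^{1 - \alpha + (1+\beta)/40}$, which is comfortably smaller than $X Q_2^{-1+\epsilon} \geq X^{1 - 1/40}$ for the explicit constants $\alpha, \beta$ coming from \cite{TT,EPW} (the exponent $1/40$ is chosen precisely to buy this slack).

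The main obstacle is the first step: showing that the squarefree asymptotic of Lemma \ref{lem:power_saving} persists when one refines the condition $p \mid \Disc(F)$ to the higher-order condition $v_p(\Disc(F)) \geq 2$, with local density of order $p^{-2}$ and an error term of the same qualitative shape $O(X^{1-\alpha} q^{\beta})$. Splitting by inertia type in the quintic parametrization is the standard route and is already implicit in \cite{EPW}, but one must track the sieving argument carefully to ensure the error remains of acceptable size when several such higher-order local conditions are imposed simultaneously.
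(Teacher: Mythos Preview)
Your approach is genuinely different from the paper's, and while it could plausibly be made to work, the argument as written has a real gap in the error analysis.

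The paper does not go through Lemma~\ref{lem:power_saving} or \cite{EPW} at all. Instead it appeals directly to Bhargava's lattice-point upper bound \cite[(27)]{Bha2}: quintic fields are parametrized by $G_\Z$-orbits on a $40$-dimensional lattice $V_\Z$, the condition $q_1 q_2^2 \mid \Disc(F)$ cuts out a congruence set modulo $q_1 q_2$ of density $\delta(q_1,q_2) \ll q_1^{-1+\epsilon} q_2^{-2+\epsilon}$ (read off from the tables in \cite{HCL4}), and \cite[(27)]{Bha2} gives
\[
\#\{F : |\Disc(F)| < X,\ q_1 q_2^2 \mid \Disc(F)\} \ll \delta(q_1,q_2)\,X + \delta(q_1,q_2)\,q_1 q_2\,X^{39/40}.
\]
The hypothesis $q_1 q_2 \ll X^{1/40}$ is \emph{exactly} the crossover point where the second term is dominated by the first; the exponent $1/40$ is the reciprocal of the lattice dimension and has nothing to do with the error exponents in \cite{TT,EPW}. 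Summing the main term over $q_1 \sim Q_1$, $q_2 \sim Q_2$ gives the lemma.

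Your route instead asks for an asymptotic of the shape $CX \cdot (\text{Euler product}) + O(X^{1-\alpha}(q_1 q_2)^\beta)$ with the $q_2$-part of the Euler product of order $p^{-2}$. Two issues: first, as you yourself flag, this refined two-modulus version of Lemma~\ref{lem:power_saving} is not literally in \cite{TT,EPW} and would have to be extracted from the sieve in \cite{EPW} or reproved; second, and more seriously, your claim that the summed error $X^{1-\alpha}(Q_1 Q_2)^{1+\beta}$ is acceptable under $Q_1 Q_2 \ll X^{1/40}$ requires $\alpha \geq (2+\beta)/40$, and you assert this holds ``for the explicit constants $\alpha,\beta$ coming from \cite{TT,EPW}'' without checking. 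The power saving for $d=5$ in \cite{EPW} is quite small, and it is not at all obvious that this inequality holds; your remark that ``the exponent $1/40$ is chosen precisely to buy this slack'' is a misdiagnosis of where $1/40$ comes from. The paper's direct appeal to \cite[(27)]{Bha2} sidesteps both problems.
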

Here, and throughout this section, we write $q \sim Q$ as a shorthand for $q \in [Q, 2Q)$. Throughout all $q_i$ are assumed squarefree
and coprime to each other.
\begin{proof}
This follows from \cite[(27)]{Bha2}, as we now briefly recall.

In \cite{Bha2}, fields $F \in \mathcal{F}_{S_5}$ are parametrized by points in a certain $40$-dimensional lattice
$V_\Z$, up to the action of a group $G_\Z$, and which satisfy certain conditions, including a `maximality' condition $\pmod{p^2}$ for every prime $p$.
The condition that $q_1 q_2^2 \mid \Disc(F)$
corresponds to a set of congruence conditions modulo $q_1 q_2$ in $V_\Z$, and the density $\delta(q_1, q_2)$ of these conditions
is seen to be $\ll q_1^{-1 + \epsilon} q_2^{-2 + \epsilon}$ by the tables in \cite[Section 12]{HCL4}. 

We keep also the condition that $v \in V_\Z$ be `irreducible' (satisfied for any quintic field), but drop the remaining
conditions, including maximality. Then, \cite[(27)]{Bha2} provides an upper bound for the number of such lattice points, subject to the condition
that $q_1 q_2 = O(X^{1/40})$, yielding 
\[
\# \{ F \in \mathcal{F}_{S_5}, \ |\Disc(F)| < X, \ q_1 q_2^2 \mid \Disc(F) \} \ll
\delta(q_1, q_2) X + O\big(\delta(q_1, q_2) q_1 q_2 X^{39/40} \big).
\]
The error term is $O(q_1^\epsilon q_2^{-1 + \epsilon} X^{39/40})$, bounded by the 
main term when $q_1 q_2 \ll X^{1/40}$.
Summing over $q_1 \sim Q_1$ and $q_2 \sim Q_2$ yields the stated bound.
\end{proof}

We also note that in proving Hypothesis \ref{A3} it is possible to combine splitting types. 
Suppose that $\gamma_{1}, \cdots, \gamma_{m}$ are disjoint sets whose union is
a set of the representatives of the nontrivial conjugacy classes in $S_d$. We then
define our splitting types $\Sigma_{(q_1, \cdots, q_m)}$, such that for each $p \mid q_k$
we insist that the associated inertia group have generator conjugate to any element of $\gamma_{k}$.
If we define $N_d(X, \Sigma)$ analogously, we ask that
\begin{equation}\label{eq:A2_combined}
\sum_{\Sigma} 
N_{d}(X,\Sigma) \ll X \prod_{k = 1}^m Q_k^{r_{\gamma_k}},
\end{equation}
as the direct analogue of \eqref{eq:A2}, where $r_{\gamma_k}$ is required to satisfy 
\eqref{eq:index_bound} for each $g \in \gamma_{k}$. It is essentially immediate to check 
that \eqref{eq:A2_combined} implies \eqref{eq:A2}.

\medskip

For $d = 3, 4$, we achieve an essentially sharp quantitative bound.
For an $S_d$-field $F$ ($d = 3, 4$), the Galois closure $F^c$ contains a 
subfield $E$ of degree $d - 1$ inside $F^c$, unique up to isomorphism, called 
the (quadratic or cubic) {\itshape resolvent field} of $F$.
We have
\begin{equation}\label{eq:resolvent_disc}
\Disc(F) = \Disc(E) f^2
\end{equation}
for a positive integer $f$, squarefree apart from a possible factor of $2$ when $d = 4$ or $3$ when $d = 3$, and divisible
precisely by those primes which have the following splitting types in $F$: for $d = 3$, $(1^3)$; for
$d = 4$,  $(1^2 1^2)$, $(2^2)$, or $(1^4)$.

In this situation,  the number of $F$ with resolvent $E$ and which satisfy
\eqref{eq:resolvent_disc} for a given $f$ is $O(\# \Cl(E)[3] \cdot 2^{\omega(f)})$ in the cubic case, 
and $O(\# \Cl(E)[2] \cdot 3^{\omega(f)})$ in the quartic case. Moreover, in the case $f = 1$, this
number equals $\frac{ \# \Cl(E)[3] - 1}{2}$ or $\# \Cl(E)[2] - 1$ respectively, so that
Lemma \ref{lem:power_saving} also implies the estimate
\begin{equation}\label{eq:ps2}
\sum_{\substack{ |\Disc(E)| < X \\ q \mid \Disc(E)}} \# \Cl(E)[\ell] \ll
X \prod_{p \mid q} \left( p^{-1} + O(p^{-2}) \right) +
   X^{1 - \alpha} Q^{\beta} 
 \end{equation}
 in both cases, with $\ell = 3$ or $2$ respectively.
This machinery also leads to {\itshape local uniformity
estimates}, to be cited in the course of the proof.

This machinery is due largely to Hasse and Heilbronn; for references, see \cite[Lemmas 3.1 and 5.1]{BBP}, \cite{baily}, 
\cite[Section 6]{DW3}, and \cite[Lemma 26]{Bha1}.

For $d = 3$, then, denote
\begin{equation}\label{eq:n3_def}
N_3(X; q_1, q_2) := 
\# \Big\{ F \in \mathcal{F}_{S_3}, \ |\Disc(F)| < X, \ \substack{ p \mid q_1 \implies p \textnormal{ has splitting type } (1^2 1) \\ 
 p \mid q_2 \implies p \textnormal{ has splitting type } (1^3) 
} \Big\}.
\end{equation}
We seek a bound on $\sum_{q_1 \sim Q_1} \sum_{q_2 \sim Q_2} N_3(X; q_1, q_2)$.
     For $d = 3$, we will prove that
     \begin{equation}\label{eq:d3_toprove}
     \sum_{q_1\sim Q_1} \sum_{q_2\sim Q_2}  N_3(X; q_1, q_2) \ll X Q_1^{\epsilon} Q_2^{-1+\epsilon}.
     \end{equation}
     
     Suppose first that $Q_1 Q_2 > X^{\eta}$, for any fixed $\eta > 0$. Then, since
    each $F$ in the sum has at most $O(X^{\epsilon'})$ ramified primes,  
     we have (for any $\epsilon' > 0$) that
     \begin{equation}\label{eq:d3_1}
\small{     \sum_{q_1\sim Q_1} \sum_{q_2\sim Q_2} N_3(X; q_1, q_2) \ll_{\epsilon'} X^{\epsilon'} 
     \sum_{q_2\sim Q_2} 
     \# \Big\{ F \in \mathcal{F}_{S_3}, \ |\Disc(F)| < X, \ 
 p \mid q_2 \implies p \textnormal{ has splitting type } (1^3) 
 \Big\} }.
\end{equation}
A local uniformity estimate, due essentially to Davenport and Heilbronn and appearing more explicitly as \cite[Proposition 6.2]{DW3}
or \cite[Lemma 3.3]{BBP}, states that each summand on the right of \eqref{eq:d3_1} is $\ll X/q_2^{2 - \epsilon'}$.
Therefore, the sum in \eqref{eq:d3_1} is $O(X^{1 + \epsilon'} / Q_2^{1 - \epsilon'}$). The result follows upon choosing $\epsilon' = \frac{\eta \epsilon}{\eta + 1}$.
     
When $Q_1 Q_2 \leq X^{\eta}$, we have
     \begin{equation}\label{eq:d3_bound}
     \begin{aligned}
     \sum_{q_1 \sim Q_1} \sum_{q_2 \sim Q_2} N_3(X; q_1, q_2)
     & \ll \sum_{q_1 \sim Q_1} \sum_{q_2 \sim Q_2} \sum_{\substack{E \\ q_1|\Disc(E)}} \# \Cl_E[3]
     \sum_{\substack{f\le \left(\frac{X}{\Disc(E)}\right)^{1/2} \\ q_2 \mid f}} 2^{\omega(f)}\\
     & <
     \sum_{q_2 \sim Q_2} 
     \sum_{\substack{f \\ q_2 \mid f}} 2^{\omega(f)} 
     \sum_{q_1 \sim Q_1}
     \sum_{\substack{\Disc(E) \leq X/f^2 \\ q_1 \mid \Disc(E)}} \# \Cl_E[3]
     \\ &
     \ll 
     \sum_{q_2 \sim Q_2} 
     \sum_{\substack{f \\ q_2 \mid f}} 2^{\omega(f)} 
     \sum_{q_1 \sim Q_1}
     \bigg( \frac{X}{q_1^{1 - \epsilon} f^{2}} + \frac{X^{1 - \alpha} q_1^\beta}{f^{2 - 2\alpha}} \bigg).
     \end{aligned} 
     \end{equation}
     The sums over $E$ are over quadratic fields, and the last inequality follows by Lemma \ref{lem:power_saving}
     and \eqref{eq:ps2}.
We have that
\[
\sum_{\substack{f \\ q_2 \mid f}} \frac{2^{\omega(f)}}{f^{2 - 2\alpha}} \ll \frac{1}{q_2^{2 - 2\alpha - \epsilon}},
\]
and similarly with the $2 \alpha$ removed. We thus have
\[
     \sum_{q_1 \sim Q_1} \sum_{q_2 \sim Q_2} N_3(X; q_1, q_2)
\ll X Q_1^{\epsilon} Q_2^{-1 + \epsilon} \left( 
1 + X^{-\alpha} Q_1^{\beta + 1} Q_2^{2\alpha}
\right).
\]
Assuming that $\beta > 0$ and $\alpha < \frac{1}{2}$, as we may, the claimed result follows upon choosing $\eta = \frac{\alpha}{\beta + 1}$.
  
  \medskip
  For $d = 4$, we will combine splitting types as outlined above, and write
  \begin{equation}\label{eq:n4_def}
  N_4(X; q_1, q_2) := 
\# \Big\{ F \in \mathcal{F}_{S_4}, \ |\Disc(F)| < X, \ \substack{ p \mid q_1 \implies p \textnormal{ has splitting type } (1^2 11), \ (1^2 2), \ \textnormal{or } (1^3 1) \\ 
 p \mid q_2 \implies p \textnormal{ has splitting type } (1^2 1^2), \ (2^2), \ \textnormal{or } (1^4) \\
} \Big\}.
\end{equation}
The same argument as above now yields the bound
         \begin{equation}\label{eq:d4_toprove}
         \sum_{q_1\sim Q_1} \sum_{q_2\sim Q_2}  N_4(X; q_1, q_2) \ll X Q_1^{\epsilon} Q_2^{-1+\epsilon}.
         \end{equation}
In \eqref{eq:d3_bound} the sums over $E$ are now over
cubic fields instead of quadratic; $\Cl_E[3]$ is replaced with $\Cl_E[2]$ and $2^{\omega(f)}$ with $3^{\omega(f)}$;
the required local uniformity estimate is \cite[Lemma 5.2]{BBP}.
In all other respects the proof is identical. 

\medskip
For $d = 5$, uniformity estimates of comparable strength are not known; however, it was proved in \cite{BCT} that 
\begin{equation}\label{eq:bound_bigQ}
\sum_{q_2 \sim Q_2} 
\# \{ F \in \mathcal{F}_{S_5}, \ |d_F| < X, \ q_2^2 \mid d_F \} \ll X^{39/40 + \epsilon'} + X^{1 + \epsilon'}/Q_2,
\end{equation}
 by means of the Bhargava-Ekedahl geometric sieve \cite{geosieve}. Defining (for squarefree $q_1, q_2$) 
 \begin{equation}\label{eq:n5_def}
N_5(X; q_1, q_2) := 
\# \{ F \in \mathcal{F}_{S_5}, \ |d_F| < X, \ q_1 q_2^2 \mid d_F \},
\end{equation}
we thus claim that
\begin{equation}\label{eq:geosieve_app}
\sum_{q_1 \sim Q_1} 
\sum_{q_2 \sim Q_2} 
N_5(X; q_1, q_2) \ll
X Q_1^{\epsilon} Q_2^{- 1/20 + \epsilon},
\end{equation}
uniformly in $Q_2$. For $Q_1 Q_2 \ll X^{1/40}$, this is a weaker form of Lemma \ref{lem:bound_smallQ}. When $Q_1 Q_2 > X^{1/40}$,
\eqref{eq:bound_bigQ} implies that
\begin{align*}
\sum_{q_1 \sim Q_1} 
\sum_{q_2 \sim Q_2} 
N_5(X; q_1, q_2) & \ll X^{39/40 + 2\epsilon'} + X^{1 + 2\epsilon'}/Q_2 \\
&  \ll (Q_1 Q_2)^{\epsilon} \cdot \left(X^{39/40} + X/Q_2\right) \\
&  \ll (Q_1 Q_2)^{\epsilon} \cdot XQ_2^{-1/20}
\end{align*}
as claimed, as $q_2 < X^{1/2}$ if $N_5(X; q_1, q_2) \neq 0$.

\medskip
{\itshape Comparison to needed results.} The final step is to show that
\eqref{eq:d3_toprove}, \eqref{eq:d4_toprove}, 
\eqref{eq:geosieve_app} are at least as strong as required in Hypothesis \ref{A3}.

By Lemma \ref{lem:inequ}, we require that $r_{g} < 0$ for each $g$ explicitly enumerated there, and that
$r_g <  \frac{1}{|A|}$ for the remaining $g$. 
Combining splitting types as in \eqref{eq:A2_combined}, 
we therefore require in \eqref{eq:d3_toprove}, \eqref{eq:d4_toprove}, 
\eqref{eq:geosieve_app} a bound of $X Q_1^{\alpha_1} Q_2^{\alpha_2}$,
with $\alpha_i < 0$ when the primes dividing $q_i$ may have the splitting types enumerated in Lemma \ref{lem:inequ},
and $\alpha_i < \frac{1}{|A|}$ otherwise. In all three cases we therefore see that a bound of
$X Q_1^{1/|A| - \delta} Q_2^{-\delta}$ suffices for any fixed $\delta$, and in particular that the bounds proved above suffice.

\section{Proof of Theorem \ref{thm:main_general}}\label{maintheoremproof}

For $d = 3, 4, 5$ and an abelian group $A$, we write
	\begin{equation}\label{eq:def2db}
	\Gamma = \Gamma_{A, d} := \{(F,K):~ [F : \Q] = d, \ \Gal(F^c/\Q) = S_d, \ \Gal(K/\Q) = A\}.
	\end{equation}

Then, by Lemma \ref{lem:composita}, we have 
	\begin{equation}\label{eq:def2da}
	N(X, S_d \times A) = \#\{(F,K) \in \Gamma, \Gal(F^cK/\Q)= S_d\times A,  ~ | \Disc(FK) | <X \}.
	\end{equation}
	
	Let $Y > |A|d!$ be a parameter, and let $S_Y$ be the set of all rational primes $p \leq Y$. For any pair $(F,K) \in \Gamma$, define 
	\begin{align*}
	\Disc_Y(FK) := \prod_{p \in S_Y} \Disc_p(FK) \prod_{p \notin S_Y} \Disc_p(F)^{|A|} \Disc_p(K)^{d}
	\end{align*}
	where $\Disc_p(L)$ denotes the $p$-part of the discriminant of a number field $L$. Let 
	\begin{equation}\label{eq:def2da_2}	
	N_Y(X, S_d \times A) := \# \{(F,K) \in \Gamma:~ \Gal(F^cK/\Q) = S_d\times A,  \Disc_Y(FK) < X\}.
	\end{equation}
	
	Then it is immediate from Lemma \ref{lem:Delta} that for 
	$ Y > 0$ we have
	\begin{equation}\label{eq:NY_ineq}
	N_Y(X, S_d \times A) \leq N(X, S_d \times A), 
	\end{equation}
	and that for $0 < Y_1 < Y_2$ we have
	\begin{equation}\label{eq:NYY_ineq}
	N_{Y_1}(X, S_d \times A) \leq  N_{Y_2}(X, S_d \times A).
	\end{equation}

\subsection{Splitting types}\label{subsec:splitting}
	Let $S$ be a finite set of rational primes.
	Let $\Sigma_{S,n}$ consist of a choice of \'etale $\Q_p$-algebra of dimension $n$ for each prime $p \in S$. 
	An \'etale $\Q_p$-algebra $\mathcal{A}$ of dimension $n$ can be expressed uniquely (up to isomorphism) as 
	a direct product of finite extensions $K_{\frak{P}_i}$ of $\mathbb{Q}_{p}$ for $i=1, \ldots, \ell(\mathcal{A})$; that is,
	\begin{align*}
	\mathcal{A} \cong \prod_{i=1}^{\ell(\mathcal{A})} K_{\mathfrak{P}_{i}}, \quad \mathfrak{P}_{i}\cap\mathbb{Q} = (p), \quad
	\sum_{i=1}^{\ell(\mathcal{A})} \left[K_{\mathfrak{P}_{i}}:\mathbb{Q}_{p}\right] = n.
	\end{align*}
	It follows that given $p \in S$, there are only finitely many \'etale $\Q_p$-algebras of dimension $n$, and hence  
	only finitely many choices of sets $\Sigma_{S,n}$. 
	For each number field $L$ of degree $n$, we write $L \in \Sigma_{S,n}$ if 
	$\{L \otimes_{\Q} \Q_p\}_{p \in S}=\Sigma_{S,n}$. 
	
	Let $E_S$ denote the set of all possible pairs $(\Sigma_{S, d}, \Sigma_{S,A})$ as described above.
	For each 
	$\Sigma_{S} \in E_S$, we say that $(F,K) \in \Sigma_{S}$ 
	(or that $(F,K) \in \Sigma_{S} \cap \Gamma$, when $(F, K) \in \Gamma$)
	if $F \in \Sigma_{S, d}$ and $K \in \Sigma_{S,A}$.
	\subsection{Setup: a disjoint union.}\label{sec:disjoint_union}
	We introduce the approximation 
	\begin{equation}\label{def:tildeNY}
	N_Y'(X, S_d\times A):=\#\{  (F, K) \in \Gamma: \Disc_Y(FK)<X \}
	\end{equation}
	to $N_Y(X, S_d\times A)$, 
	which omits the condition that $\Gal(F^cK) = S_d\times A$. We will prove in
	Section \ref{sec:zero_density} that these two functions have the same asymptotic behavior as $X \rightarrow \infty$.

	We have a disjoint union
	\begin{align*}
	&\{(F,K) \in \Gamma:~ \Disc_Y(FK) < X\} =
	\bigsqcup_{\Sigma_{S_Y} \in E_{S_Y}}
	\{(F,K) \in \Sigma_{S_Y} \cap \Gamma:~ \Disc_Y(FK) < X\}.
	\end{align*}
	Then we have 
	\begin{align}\label{transfer}
	N_Y'(X, S_d\times A)
	& = \sum_{\Sigma_{S_Y} \in E_{S_Y}} \# \{(F,K) \in \Sigma_{S_Y} \cap \Gamma:~ \Disc_Y(FK) < X\} \\
	& = \sum_{\Sigma_{S_Y} \in E_{S_Y}} \# \{(F,K) \in \Sigma_{S_Y} \cap \Gamma:~ |\Disc(F)|^{|A|}|\Disc(K)|^d < d_{\Sigma_{S_Y}}X\}, \notag
	\end{align}
	where the quantity $d_{\Sigma_{S_Y}}$ is defined by
	\begin{equation}\label{def:dSigma}
	d_{\Sigma_{S_Y}} := \prod_{p \in S_Y} p^{\Delta_p(F, K)}
	\end{equation} 
	as in \eqref{eq:def_Delta}, and which by Lemma \ref{lem:Delta} is the same for all $(F, K) \in \Sigma_{S_Y} \cap \Gamma$.
	
	As $X \rightarrow \infty$, it is known that
	\begin{align*}
	\#\{F \in \Sigma_{S_Y, d} \cap \mathcal{F}_{S_d}: \ |\Disc(F)| < X\} \sim  C_{\Sigma_{S_Y,d}} X,
	\end{align*}
	\begin{align*}
	\#\{K \in \Sigma_{S_Y, A} \cap \mathcal{F}_A:~ |\Disc(K)| <X\} \sim  C_{\Sigma_{S_Y,A}} X^{a_A} (\log X)^{b_A}
	\end{align*}
	by \ref{A2} and \eqref{eq:abelian1} respectively. By the Product Lemma  \cite[Lemma 3.2]{W21}, there exists a constant $C_{\Sigma_{S_Y}} > 0$ such that
	\begin{align*}
	\# \{(F,K) \in \Sigma_{S_Y} \cap \Gamma:~ |\Disc(F)|^{|A|}|\Disc(K)|^d <  d_{\Sigma_{S_Y}} X\}
	\sim C_{\Sigma_{S_Y}} X^{1/|A|}
	\end{align*}
	as $X \rightarrow \infty$. Therefore by (\ref{transfer}) we may write
	\begin{align}\label{truncate}
	C_Y :=\sum_{\Sigma_{S_Y}}C_{\Sigma_{S_Y}} =\lim_{X \rightarrow \infty} \frac{N_Y'(X, S_d\times A)}{X^{1/|A|}},
	\end{align}
	with the limit being well defined and positive.
	\begin{remark}\label{rk:splitting} 
		We may impose splitting conditions on the $S_d \times A$-fields being counted, and obtain analogues of the same results, as follows.
		
		If in \eqref{eq:def2db} we impose an arbitrary finite set of splitting conditions on $F$ and/or $K$, 
		then we choose $Y$ to be larger than any of the finite primes
		and limit $E_{S_Y}$ to those pairs $(\Sigma_{S, d}, \Sigma_{S,A})$ which are compatible
		with the splitting conditions. We again obtain \eqref{truncate} with the same proof,
		with a smaller value of $C_Y$. 
		
		The rest of the proof then proceeds without change; the splitting conditions
		may be simply dropped in bounds like \eqref{eq:lim0} and		\eqref{eq:discrange}.
		We therefore obtain a version of Theorem \ref{thm:main_general} where finitely many splitting conditions
		may be imposed upon $F$ and/or $K$. Since the splitting conditions on $FK$ are determined by
		those on $F$ and $K$, we may alternatively impose finitely many splitting conditions on $FK$ directly.
	\end{remark}
\subsection{A zero density argument}\label{sec:zero_density}
In this section we prove that the functions $N_Y(X, S_d\times A)$ and
$N_Y'(X, S_d\times A)$, introduced  in \eqref{eq:def2da_2} and \eqref{def:tildeNY} respectively, have the
same asymptotic distribution.
\begin{lemma}
	For arbitrary $Y$, we have
	$$ \lim_{X\to \infty} \frac{N_Y(X, S_d\times A)}{X^{1/|A|}} = C_Y.$$    
\end{lemma}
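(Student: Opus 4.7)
The plan is to establish the matching lower bound to \eqref{truncate}, since trivially $N_Y(X, S_d \times A) \leq N_Y'(X, S_d \times A)$, and so it suffices to prove that the difference $N_Y'(X, S_d \times A) - N_Y(X, S_d \times A)$ is $o(X^{1/|A|})$. This difference counts pairs $(F, K) \in \Gamma$ with $\Disc_Y(FK) < X$ for which $\Gal(F^c K/\Q) \subsetneq S_d \times A$; the strict containment holds precisely when $F^c \cap K \neq \Q$.

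First I would make the Galois-theoretic reduction: since $S_d^{\mathrm{ab}} \cong \Z/2$ for $d \geq 2$, the only nontrivial abelian subextension of $F^c/\Q$ is the quadratic resolvent $E := \Q(\sqrt{\Disc(F)})$. Consequently $F^c \cap K \neq \Q$ is equivalent to $E \subseteq K$, which forces $2 \mid |A|$. If $|A|$ is odd the difference vanishes identically and the lemma is immediate. Assume henceforth that $2 \mid |A|$, set $q_E = |\Disc(E)|$, and note that $a_A = 2/|A|$ in this regime.

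Next I would bound the exceptional count by the triple sum
\[
\sum_E \,\, \sum_{\substack{F \in \mathcal{F}_{S_d} \\ \Q(\sqrt{\Disc(F)}) = E}} \,\, \#\left\{K \in \mathcal{F}_A : E \subseteq K,\ |\Disc(K)|^d \leq cX/|\Disc(F)|^{|A|}\right\},
\]
where $c$ absorbs the bounded factor arising from primes in $S_Y$. The inner $K$-count is controlled by Theorem \ref{thm:abelian}(2) with $q = q_E$, giving a saving of $q_E^{-a_A}$ beyond the naive volume. For the $F$-count, the condition $\sqrt{\Disc(F)} \in E$ yields the structural identity $\Disc(F) = q_E m^2$ (up to a bounded $2$-adic factor): every prime $p \nmid q_E$ ramifying in $F$ is forced to have even-index inertia in $S_d$, so the admissible discriminants with $|\Disc(F)| \leq M$ lie in $O(\sqrt{M/q_E})$ values of $m$, each carrying a polylogarithmic or class-number-type count (controlled by Lemma \ref{lem:power_saving} together with Davenport--Heilbronn-type bounds on $\#\Cl_E[3]$ for $d=3$, and analogous inputs involving $\#\Cl_R[2]$ for cubic resolvents $R$ when $d=4$, and parametric counts for $d=5$). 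Combining the two savings under the constraint $|\Disc(F)|^{|A|} |\Disc(K)|^d \lesssim X$ and summing over quadratic $E$ (restricted to $q_E \leq X^{1/(|A|+d)}$) should yield $N_Y'(X) - N_Y(X) = O(X^{1/|A| - \delta})$ for some $\delta = \delta(d,A) > 0$.

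The main obstacle is the exponent bookkeeping: one must verify that the combined savings from the square structure of $\Disc(F)/q_E$ and the $q_E^{-a_A}$ factor beat $X^{1/|A|}$ after imposing the Tauberian-type constraint on $|\Disc(F)|^{|A|} |\Disc(K)|^d$. The weight $|\Disc(F)|^{-2/d}$ that arises in the partial summation behaves differently in each case: the corresponding series $\sum_m m^{-4/d+\epsilon}$ converges for $d=3$, is logarithmic for $d=4$, and is dominated by its upper cutoff for $d=5$, so parallel but distinct verifications are required. In all three cases a careful exponent calculation yields a final exponent strictly below $1/|A|$, provided one uses the partial-summation cutoff $q_E \leq X^{1/(|A|+d)}$ when the relevant $m$- or $E$-series diverges.
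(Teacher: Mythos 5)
Your proposal takes a genuinely different route from the paper. The paper proves this lemma via a \emph{soft} local-density argument: it introduces a second truncation parameter $Z > Y$, decomposes $\Delta_Z(X)$ according to the finitely many splitting types at primes $p \in (Y, Z)$, applies the product lemma as in \eqref{truncate} to obtain $\Delta_Z(X) \sim C\,\delta_{(Y,Z)}\,X^{1/|A|}$, and then shows that the local density $\delta_{(Y,Z)}$ of pairs satisfying ``$p \mid\mid \Disc(F) \Rightarrow p \mid \Disc(K)$ for all $p \in (Y,Z)$'' is $\ll \prod_{Y < p < Z}(1 - p^{-1} + O(p^{-2})) \to 0$ as $Z \to \infty$. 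Crucially, since $Z$ is fixed while $X \to \infty$, the paper only ever needs the density inputs \eqref{eq:quant} and \eqref{eq:abelian3} at a bounded set of primes, so no uniformity in the modulus $q_E$ relative to $X$ is required. Your route is a \emph{hard} quantitative one: sum directly over the quadratic resolvents $E$, use $E \subseteq K$ to extract $q_E^{-a_A}$ from \eqref{eq:abelian2}, and use the identity $\Disc(F) = q_E m^2$ plus class-number bounds to control the $F$-sum. If carried out this would give the stronger bound $N_Y' - N_Y = O(X^{1/|A|-\delta})$; the paper's argument gives only $o(X^{1/|A|})$, which is all the lemma needs, and is much shorter.

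There are concrete gaps in your execution. First, as soon as you sum over $E$ with $q_E$ ranging up to $X^{1/(|A|+d)}$, you need uniformity in $q_E$; Lemma~\ref{lem:power_saving} has error $O(X^{1-\alpha}q^{\beta})$ with only $\alpha>0$, $\beta>-1$ guaranteed, and for $q_E$ near the top of the range (where $|\Disc(F)|$ is also only $\asymp q_E$) this error term can dominate the main term --- the lemma simply does not give a pointwise $q_E$-saving there. You would need a dyadic average over $q_E \sim Q$ (in the spirit of Hypothesis~\ref{A3}) or a cruder trivial bound in that range, and this is not spelled out. Second, for $d=5$ the class-number machinery ($\#\Cl_E[\ell]\cdot c^{\omega(f)}$) is not available; the paper's $d=5$ inputs (Lemma~\ref{lem:bound_smallQ}, \eqref{eq:bound_bigQ}) control $q_1 q_2^2 \mid \Disc(F)$ and not the ``complete square'' condition $\Disc(F)/q_E \in (\Z)^2$, so ``parametric counts'' is not substantiated. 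In fact the $m^2$-structure is more than you need: the weaker condition $q_E \mid \Disc(F)$ together with Lemma~\ref{lem:power_saving} (which does apply for $d=3,4,5$) and \eqref{eq:abelian2} already produces a $q_E$-exponent of $-d/|A| - 1 + \epsilon < -1$ after partial summation in $|\Disc(F)|$, which is summable --- but the uniformity-in-$q_E$ issue above still has to be addressed. Finally, the exponent bookkeeping you defer is genuinely the content of the argument, and the paper's route was designed precisely to avoid having to do it.
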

\begin{proof}
	It suffices to prove that 
	$$\lim_{X\to \infty}\frac{\Delta(X)}{X^{1/|A|}} = 0,$$
	where 
	\begin{equation}
	\begin{aligned}
	\Delta(X) &:= N_Y'(X, S_d\times A)-N_Y(X, S_d\times A) \\
	&= \# \{ (F, K)\in \Gamma \mid \Gal(F^cK/\Q)\neq S_n\times A,  \Disc_Y(FK) \le X \}.
	\end{aligned}
	\end{equation}

	Suppose that $\Gal(F^c K/\Q) \neq S_n \times A$. Then $F^c$ and $K$ must contain a nontrivial common subfield $E$, for which
		$\Gal(F^c/E)$ is normal in $S_n$ with abelian quotient. Therefore, we have $\Gal(F^c/E) = A_n$, so that $E$ is quadratic. 
		Further, for each prime $p$ with $p \mid \mid \Disc(F)$, we have $I_p \not \subset A_n$, which implies that $p$ is ramified in $E$ and hence $K$.
		We thus have the implication $p \mid \mid \Disc(F) \implies p \mid \Disc(K)$.

	For each $Z > Y$, we define 
	\begin{equation}
	\begin{aligned}
	\Delta_Z(X):=\# \{ (F, K)\in \Gamma \mid  \big(p \mid \mid \Disc(F) \implies p \mid \Disc(K)\big) \ \forall p\in (Y, Z), \ \Disc_Y(FK) \le X \},
	\end{aligned}
	\end{equation}
	and $\Delta(X) \leq \Delta_Z(X)$ holds for arbitrary $Z>Y$. Therefore it suffices to prove 
	\begin{equation}\label{eq:lim0}
	\begin{aligned}
	\lim_{Z\to \infty} \lim_{X\to \infty} \frac{\Delta_Z(X)}{X^{1/|A|}} = 0.
	\end{aligned}
	\end{equation}
	
	To prove this, as in \eqref{transfer}, we decompose the set counted by $\Delta_Z(X)$ into a union of subsets,
	for each of which a splitting condition is imposed on $F$ and $K$ at every prime $p \in (Y, Z)$. We use the product lemma and sum the results, in exactly the same way as in \eqref{truncate},
	obtaining the formula
	\begin{equation}\label{eq:deltaz}
	\Delta_Z(X) \sim C \delta_{(Y, Z)} \cdot X^{1/|A|},
	\end{equation}
	where $\delta$ is the local density of those pairs $(F, K)$ satisfying the hypothesis 
	$p \mid \mid \Disc(F) \implies p \mid \Disc(K)$ for all $p \in (Y, Z)$.

    For each subset $S\subset \{ p: Y<p<Z \}$, we consider the contribution to \eqref{eq:deltaz} of those pairs $(F, K)$ 
    where $p|| \Disc(F)$ at $p\in S$ and $p\nparallel \Disc(F)$ at $p\notin S$. 
    Then we have
    $$\delta_{(Y, Z)} \le \sum_{ S} \alpha_S \beta_{S^c},$$
    where \eqref{eq:quant} and \eqref{eq:abelian3} imply that
    $$\alpha_S  \ll C^{|S|}\prod_{p\in S} p^{-1} \cdot \big(p^{-1} + O(p^{-2})\big) $$
    and
    $$\beta_{S^c} \ll \prod_{p \in S^c} \big(1-p^{-1} + O(p^{-2})\big).$$
    Therefore we have
    $$\delta_{(Y, Z)} \ll \sum_{S}  \prod_{p\in S} \big(C p^{-2} + O(p^{-3})\big) \prod_{p\notin S} (1- p^{-1}+O(p^{-2})) = \prod_{p\in (Y, Z)} (1-p^{-1}+O(p^{-2})),$$
    and this quantity tends to $0$ as $Z\to \infty$. 

\end{proof}

\subsection{The tail estimate}
We will need the following crucial bound.
	
	\begin{proposition}\label{crucialprop} 
		We have 
		\begin{align*}
		|N(X, S_d \times A) - N_Y(X, S_d \times A)| \leq o_Y(1) \cdot X^{1/|A|}.
		\end{align*}
	\end{proposition}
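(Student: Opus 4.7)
The plan is to adapt the framework of \cite[Section 5]{W17}, now using Hypothesis \ref{A3} to handle the $F$-count on average and Theorem \ref{thm:abelian}(2) for $K$, combined in a product-lemma-style estimate.

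\emph{Setup and parametrization.} By Lemma \ref{lem:Delta}, $\Disc_Y(FK)/|\Disc(FK)| = \prod_{p > Y} \Delta_p(F, K)$, which equals $1$ unless some prime $p > Y$ is ramified in both $F$ and $K$. Thus $N - N_Y$ counts pairs $(F, K) \in \Gamma$ with $|\Disc(FK)| < X$ having at least one such prime, necessarily tame since $p > Y > d!|A|$. For each nonidentity conjugacy class $[g]$ in $S_d$ and each nonidentity $h \in A$, let $q_{g, h}(F, K)$ denote the squarefree product of primes $p > Y$ whose inertia in $F$ is conjugate to $\langle g \rangle$ and in $K$ is $\langle h \rangle$. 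These integers are pairwise coprime, and by Lemma \ref{lem:Delta},
$$D := \prod_{p > Y} \Delta_p(F, K) = \prod_{g, h} q_{g, h}^{\Delta(g, h)}.$$

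\emph{Bounding a dyadic piece.} I dyadically decompose $q_{g, h} \in [Q_{g, h}, 2Q_{g, h})$, keeping only tuples with $\max Q_{g, h} > Y$, and bound
$$M(X; \{Q_{g, h}\}) := \#\{(F, K) \in \Gamma : |\Disc(FK)| < X, \ q_{g, h}(F, K) \in [Q_{g, h}, 2Q_{g, h})\}$$
for each such tuple. Up to an $O_Y(1)$ factor from splittings at primes $\leq Y$ (of which there are only $O_Y(1)$ choices, and which can be summed over), the constraint $|\Disc(FK)| < X$ becomes $|\Disc(F)|^{|A|} |\Disc(K)|^d \ll XD$. Setting $q_k := \prod_h q_{g_k, h}$ and $q := \prod_{g,h} q_{g,h}$, Hypothesis \ref{A3} provides the averaged $F$-count $\ll X_F \prod_k Q_k^{r_{g_k}}$, while the prescribed inertia at $p \mid q_{g, h}$ implies the divisibility $K_1 := \prod_{g, h} q_{g, h}^{\ind(h)} \mid \Disc(K)$; Theorem \ref{thm:abelian}(2) then yields $\ll C^{\omega(q)}(X_K/K_1)^{a_A}$ admissible $K$ with $|\Disc(K)| < X_K$. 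Combining these via an integral over the region $X_F^{|A|} X_K^d \ll XD$ with the lower bound $X_K \geq K_1$ (in the spirit of \cite[Lemma 3.2]{W17}), and simplifying via the identity $\Delta(g, h) = |A|\ind(g) + d\ind(h) - \ind(g, h)$, one obtains
$$M(X; \{Q_{g, h}\}) \ll X^{1/|A|} \cdot C^{\omega(q)} \prod_{g, h} Q_{g, h}^{r_g + \ind(g) - \ind(g, h)/|A|}.$$

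\emph{Tail summation.} By the defining inequality \eqref{eq:index_bound} in Hypothesis \ref{A3}, every exponent $r_g + \ind(g) - \ind(g, h)/|A|$ is strictly negative; since there are finitely many types $(g, h)$, these exponents are bounded above by some fixed $-\eta < 0$. Summing over dyadic tuples with $\max Q_{g, h} > Y$ (absorbing the factor $C^{\omega(q)} \ll q^\epsilon$ into a slightly smaller $\eta' > 0$) then yields $\ll Y^{-\eta'} X^{1/|A|} = o_Y(1) X^{1/|A|}$, completing the estimate. I expect the main obstacle to be the bookkeeping of the product-lemma-style integration in the second step: one must carefully align the three contributions---the discrepancy $D = \prod q_{g, h}^{\Delta(g, h)}$, the $K$-divisibility $K_1 = \prod q_{g, h}^{\ind(h)}$, and the $F$-splitting penalty $\prod Q_k^{r_{g_k}}$---and track the boundary $X_K = K_1$ of the integration region, so that the resulting exponent of each $Q_{g, h}$ collapses to exactly the quantity $r_g + \ind(g) - \ind(g, h)/|A|$ governed by \eqref{eq:index_bound}. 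The wildly ramified primes contribute only a bounded factor $O_{d, A}(1)$ by the last part of Lemma \ref{lem:Delta}, absorbed into the implied constants.
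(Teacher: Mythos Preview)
Your proposal is correct and follows essentially the same approach as the paper: decompose by the inertia types $(g,h)$ at jointly tamely ramified primes, dyadically decompose the resulting parameters, apply Theorem~\ref{thm:abelian}(2) for the $K$-count and Hypothesis~\ref{A3} for the averaged $F$-count, and verify that each $Q_{g,h}$-exponent collapses to the negative quantity $r_g + \ind(g) - \ind(g,h)/|A|$ governed by \eqref{eq:index_bound}. The paper differs only in presentation---it indexes by all nontrivial conjugacy classes of $S_d \times A$ rather than pairs with both components nontrivial, and in place of your product-lemma-style integration it fixes $F$, bounds the $K$-count (absorbing the logarithm via a parameter $\delta(A)>a_A$), and then sums dyadically over $|\Disc(F)|\in[B,2B)$---arriving at exactly the same exponent \eqref{eq:beta} and the same $o_Y(1)$ tail.
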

\begin{proof}  
	
		We begin by decomposing the set $\Gamma$ of \eqref{eq:def2db} into a disjoint union, tracking the ramification behavior of $(F, K)$ 
		at those primes $p$ where both $F$ and $K$ are ramified.
		For each $\Sigma_S$, we define the set
		\begin{align*}
		\Gamma_{\Sigma_{S}}&:=\{(F,K) \in \Sigma_{S}:~\textrm{$F$ and $K$ are not both ramified at any prime $p \notin S$}\}.
		\end{align*}
		
		We immediately
			obtain the following:
		
		\begin{lemma}\label{disjointunion} We have a disjoint union
			\begin{align*}
			\Gamma = \bigsqcup_{S} \bigsqcup_{\Sigma_S \in \mathcal{R}_S} \Gamma_{\Sigma_S}
			\end{align*}
			where $S$ ranges over all subsets of the rational primes which contain the divisors of $|A|d!$, and for each $S$ the set $\mathcal{R}_S:=\{\Sigma_S\}$ consists of all pairs
			\begin{align*}
			\Sigma_{S} := (\Sigma_{S,d}, \Sigma_{S,A})
			\end{align*}
			such that for each $p \in S$ not dividing $|A|d!$, the \'etale $\Q_p$-algebras determined by $\Sigma_{S,d}$ and $\Sigma_{S,A}$ are ramified over $\Q_p$.
		\end{lemma}

		By Lemma \ref{disjointunion} and Lemma \ref{lem:Delta}, we get 
		\begin{align}
		N(X, S_d \times A) - N_Y(X, S_d \times A) 
		& \leq \#\{(F,K) \in \Gamma:~|\Disc(FK)| < X < \Disc_Y(FK)\} \notag \\
		& = \sum_{S} \sum_{\Sigma_S \in \mathcal{R}_S} 
		\# \{(F,K) \in \Gamma_{\Sigma_S}:~|\Disc(FK)| < X < \Disc_Y(FK)\} \label{eq:discrange} \\
		& \leq \sum_{S} \sum_{\Sigma_S \in \mathcal{R}_S} \# \{(F,K) \in \Gamma_{\Sigma_S}:~|\Disc(FK)| < X \} \label{eq:discrange2} \\
		& \leq \sum_{S} \sum_{\Sigma_S \in \mathcal{R}_S}
		\# \{(F,K) \in \Gamma_{\Sigma_S}:~ |\Disc(F)|^{|A|}|\Disc(K)|^d < d_{\Sigma_S} X \}, \label{differencedecomp}
		\end{align}
		where the quantity $d_{\Sigma_{S}}$ is defined by
		\begin{equation}\label{def:dSigmaS}
		d_{\Sigma_S} := \Delta_{\textnormal{wild}} \prod_{\substack{p \in S \\ p \nmid |A|d!}} p^{\Delta_p(F, K)}
		\end{equation}
		analogously to \eqref{def:dSigma}, and where $\Delta_{\textnormal{wild}}$, the maximum possible product of $p^{\Delta_p(F, K)}$
			over all primes dividing $|A|d!$, may be taken to be a constant.
		Moreover, the sum in \eqref{differencedecomp} is restricted to those $\Sigma_S$ for which $S$ contains at least one prime $p > Y$,
		because any summand in \eqref{eq:discrange} for which $S \subset S_Y$ is zero. 
	
		We now introduce a decomposition of the sum in \eqref{differencedecomp}. 
		
		For each $\Sigma_S = (\Sigma_{S, d}, \Sigma_{S,A})$, and for each $p \in S$ not dividing $|A|d!$, 
		let $g_{\Sigma_{S},d, p} \in S_d$ be a generator of the inertia group $I_{F,p}$, for any $F \in \Sigma_{S,d}$. We define $g_{\Sigma_S, A, p}$ analogously, and write
		$g_{\Sigma_S, p} = (g_{\Sigma_S, d, p}, g_{\Sigma_S, A, p}) \in S_d \times A$. Then the conjugacy classes and indices of these
		elements are well defined; see Section \ref{sec:ant}.
		
		We then let $g_1, \dots, g_m$ be representatives of the nontrivial conjugacy classes in $S_d \times A$,
		and write each $g_k = (g_{k, d}, g_{k, A})$ with $g_{k, d} \in S_d$ and $g_{k, A} \in A$.
		%
		For each squarefree integer $q \in \Z^{+}$ coprime to $|A|d!$, let $S(q)$ denote the set of primes dividing $q$, and define
		\begin{align*}
		V_q:=\{ \vecq = (q_1, \ldots, q_m) \in \Z_{+}^m:~(q_k,q_j)=1, ~ \textrm{$q_k$ squarefree}, ~ \prod_{k=1}^{m}q_k =q\}.
		\end{align*}
		
		For each vector $\vecq \in V_q$, we associate the set of 
		``tame splitting types'' 
		\begin{align*}
		&\Sigma_{\vecq}:=\{\Sigma_{S(q)} \in \mathcal{R}_{S(q)}: 
		~ [g_{\Sigma_{S(q)},p}]=[g_k] ~ \textrm{for each $p|q_k$, $k=1, \ldots, m$}\}
		\end{align*}
		and we say that $(F,K) \in \Sigma_{\vecq}$ if $(F,K) \in \Sigma_{S(q)}$ for some $\Sigma_{S(q)} \in \Sigma_{\vecq}$.
		We then define 
		\begin{align*}
		\Gamma_{\Sigma_{\vecq}}:=
		\{(F,K) \in \Sigma_{\vecq}:~\textrm{$F$ and $K$ are not both  
			ramified at any prime $p \nmid |A|d!q$}\}.
		\end{align*}
		Then, we see that
		\begin{align}\label{lastdecomp}
		\bigsqcup_{S} \bigsqcup_{\Sigma_S \in \mathcal{R}_S} \Gamma_{\Sigma_S} \subset 
		\bigsqcup_{\substack{\textrm{$q$ squarefree} \\ (q, |A|d!) = 1}}\bigsqcup_{(q_1, \ldots, q_m) \in V_{q}}\Gamma_{\Sigma_{\vecq}}
		\end{align}
		by associating to each $(F,K) \in \Gamma_{\Sigma_S}$ 
		the vector $(q_1, \ldots, q_m) \in V_q$, with $q = q(S) := \prod_{p \in S} p$, whose components are defined by
		\begin{align*}
		q_k:=\prod_{\substack{p \in S \\ [g_{\Sigma_{S},p}]=[g_k]}}p, \quad k=1, \ldots, m.
		\end{align*}

		From (\ref{differencedecomp}) and (\ref{lastdecomp}) we get 
		\begin{align*}
		N(X, S_d \times A) - N_Y(X, S_d \times A)
		\leq  \sum_{\substack{q > Y \\ \textrm{$q$ squarefree} \\ (q, |A|d!) = 1}}\sum_{\vecq \in V_q}
		\# \{(F,K) \in \Gamma_{\Sigma_{\vecq}}:~|\Disc(F)|^{|A|} |\Disc(K)|^d < d_{\Sigma_{\vecq}}X\},
		\end{align*} 
		where $d_{\Sigma_{\vecq}}$ is the common value of $d_{\Sigma_S}$ for all $\Sigma_{S(q)} \in \Sigma_{\vecq}$.
				We introduce the dyadic decomposition 
		\begin{align*}
		\R_{\geq 1}^m = \bigsqcup_{\veci = (i_1, \ldots, i_m) \in \Z_{\geq 0}^m}
		\prod_{k=1}^{m}[Q_{i_k}, 2 Q_{i_k}),
		\end{align*}
		with $Q_{i_k}:=2^{i_k}$ for $k=1, \ldots, m$, and write
		\[
		V_{\veci} := \{ \vecq = (q_1, \cdots, q_m): q_k \in [Q_{i_k}, 2 Q_{i_k}) \textnormal{ for each } k \},
		\]
		where the $q_k$ are again assumed coprime to each other and to $|A|d!$.
		Then
		\begin{align*}
		& N(X, S_d \times A) - N_Y(X, S_d \times A) \notag \\ &
		\leq \sum_{\substack{\veci = (i_1, \ldots, i_m) \in \Z_{\geq 0}^m \\ 2^{(i_1 + \cdots + i_m)} > Y/2^{m}}}
		\sum_{\vecq \in V_{\veci}}
		\# \{(F,K) \in \Gamma_{\Sigma_{\vecq}}:~|\Disc(F)|^{|A|} |\Disc(K)|^d < d_{\Sigma_{\vecq}}X\}.
		\end{align*}
		By Lemma \ref{lem:Delta} we have
		\begin{equation}\label{eq:dsigma1}
		d_{\Sigma_{\vecq}} \asymp \prod_{k=1}^{m}Q_{i_k}^{\Delta(g_k)}
		\end{equation}
		for each $\vecq \in V_{\veci}$, where the $\Delta(g, k)$ is the common value of $\Delta_p(F, K)$ for all
		$(F, K) \in \Gamma_{\Sigma_{\vecq}}$ and $p \mid q_k$, given explicitly by
		\begin{equation}\label{eq:dsigma2}
		\Delta(g_k) =  |A| \cdot \ind(g_{k, d}) + d \cdot \ind(g_{k, A}) - \ind(g_k).
		\end{equation}
		We thus have that 
		\begin{align}\label{differencedecomp2}
		& N(X, S_d \times A) - N_Y(X, S_d \times A) \notag \\
		& \leq \sum_{\substack{\veci = (i_1, \ldots, i_m) \in \Z_{\geq 0}^m \\ 2^{(i_1 + \cdots + i_m)} > Y/2^{m}}}
		\sum_{\vecq \in V_{\veci}}
		\# \{(F,K) \in \Gamma_{\Sigma_{\vecq}}:~|\Disc(F)|^{|A|} |\Disc(K)|^d < C \prod_{k=1}^{m}Q_{i_k}^{\Delta(g_k)} \cdot X\},
		\end{align}
		for a constant $C$ depending only on $d$ and $|A|$.
		Each $K$ being counted satisfies $\prod_k q_k^{\ind(g_{k, A})} \mid \Disc(K)$.
		Therefore, for each fixed $F$ and $\vecq$, 
		Theorem \ref{thm:abelian} implies that
		the number of $K$ contributing to \eqref{differencedecomp2} 
		is
		\begin{equation}\label{eq:before_epsilon}
		\ll X^{\frac{a_A}{d}}|\Disc(F)|^{-\frac{|A|a_A}{d}} 
		\prod_{k=1}^{m}Q_{i_k}^{a_A \theta_k}
		\cdot \log \left( \left(  \frac{X \cdot \prod_k Q_{i_k}^{\Delta(g_k)} }{|\Disc(F)|^{|A|}} \right)^{1/d} \right)^{b_A},
		\end{equation}
		with
		\begin{equation}\label{eq:def_theta}
		\theta_k := \frac{\Delta(g_k)}{d}-\ind(g_{k, A}).
		\end{equation}
		
		It will be convenient to eliminate the logarithmic term from \eqref{eq:before_epsilon}:
		given arbitrary
		$\epsilon > 0$, we may choose $\delta(A) > a_A$ with $\delta(A) - a_A$ 
		small,  and so that the expression in \eqref{eq:before_epsilon} is
		
		\begin{equation}\label{eq:epsilon}
		\ll X^{\frac{\delta(A)}{d}}|\Disc(F)|^{-\frac{|A|\delta(A)}{d}} 
		\prod_{k=1}^{m}Q_{i_k}^{\delta(A) \theta_k + \varepsilon}.
		\end{equation}

		Therefore, for each $B$, the contribution to \eqref{differencedecomp2} from those $F$ with $|\Disc(F)| \in [B, 2B)$ is  
		\begin{align}\label{differencedecomp3}
		\ll X^{\delta(A)/d} \sum_{\substack{\veci = (i_1, \ldots, i_m) \in \Z_{\geq 0}^m \\ 2^{(i_1 + \cdots + i_m)} > Y/2^{m}}} \nonumber
		& B^{-\frac{|A|\delta(A)}{d}} \prod_{k=1}^{m}Q_{i_k}^{\delta(A) \theta_k + \varepsilon}
		\times \\ &
		\sum_{\vecq \in V_{\veci}} \# \{F \ : (F, \ast) \in \Gamma_{\Sigma_{\vecq}} :~ B \leq |\Disc(F)| < 2B \}. 
		\end{align}
		
		Here we write $(F, \ast) \in \Gamma_{\Sigma_{\vecq}}$ to indicate that the splitting type $\Gamma_{\Sigma_{\vecq}}$ is imposed on $F$ only,
		i.e. that  $I_p = \langle g_{k,d} \rangle$ for each $p| q_k$.
		
		The averaged uniformity hypothesis (\ref{A3}) provides quantities $r_{g_k}$ (associated to the $g_{k, d}$ component of $g_k$ alone, and 
		satisfying an inequality to be recalled shortly)
		for which the inner sum is 
		$ \ll B \cdot \prod_{k=1}^{m}Q_{i_k}^{r_{g_k}}$, so that the above expression is 
		\begin{align}\label{differencedecomp4}
		\ll X^{\delta(A)/d} \sum_{\substack{\veci = (i_1, \ldots, i_m) \in \Z_{\geq 0}^m \\ 2^{(i_1 + \cdots + i_m)} > Y/2^{m}}}
		B^{1 -\frac{|A|\delta(A)}{d}} \prod_{k=1}^{m}Q_{i_k}^{\delta(A) \theta_k +  r_{g_k} + \varepsilon}.
		\end{align}
		We now sum over dyadic intervals $[B, 2B)$. Since $1 - \frac{|A| \delta(A)}{d} > 0$ (provided that $\delta(A) - a_A$ is chosen to be small), 
		the expression in
		\eqref{differencedecomp4} is bounded, up to an implied constant, by the contribution of the largest possible $B$.
		Since we have $|\Disc(K)| \geq \prod_{k=1}^{m}Q_{i_k}^{\ind(g_{k, A})}$ for each $K$ contributing to \eqref{differencedecomp2}, we get
		\begin{align*}
		|\Disc(F)| \ll X^{1/|A|}\prod_{k=1}^{m}Q_{i_k}^{\frac{d}{|A|} \theta_k}
		\end{align*}
		and we may take this as an upper bound for $B$. Therefore, the expression in \eqref{differencedecomp2} becomes
		\begin{align*}
		& \ll  X^{\delta(A)/d} \sum_{\substack{\veci = (i_1, \ldots, i_m) \in \Z_{\geq 0}^m \\ 2^{(i_1 + \cdots + i_m)} > Y/2^{m}}} 
		\left(X^{1/|A|}\prod_{k=1}^{m}Q_{i_k}^{\frac{d}{|A|} \theta_k} \right)^{1 -\frac{|A|\delta(A)}{d}}
		\prod_{k = 1}^m Q_{i_k}^{\delta(A) \theta_k +  r_{g_k} + \varepsilon} \\
		& \ll X^{1/|A|} 
		\sum_{\substack{\veci = (i_1, \ldots, i_m) \in \Z_{\geq 0}^m \\ 2^{(i_1 + \cdots + i_m)} > Y/2^{m}}}  
		\prod_{k = 1}^m Q_{i_k}^{\frac{d}{|A|}\theta_k +
			r_{g_k} + \varepsilon}.
		\end{align*}
		
	By \eqref{eq:index_bound} in our averaged uniformity hypothesis (\ref{A3}), along with the identity in \eqref{eq:inequ}, we have   
		\begin{equation}\label{eq:beta}
		\beta:=\max_{1 \leq k \leq m}\bigg\{\frac{d}{|A|}\theta_k + r_{g_k} \bigg\} = 
		\max_{1 \leq k \leq m}\bigg\{\frac{\Delta(g_k)}{|A|} - \frac{d \cdot \ind(g_{k, A})}{|A|}  +
		r_{g_k}  \bigg\} < 0.
		\end{equation}
     	        We obtain that  
		\begin{equation}
		N(X, S_d \times A) - N_Y(X, S_d \times A)
		\ll 
		X^{1/|A|} 
		\sum_{\substack{\veci = (i_1, \ldots, i_m) \in \Z_{\geq 0}^m \\ 2^{(i_1 + \cdots + i_m)} > Y/2^{m}}}  
		\prod_{k = 1}^m Q_{i_k}^{\beta + \varepsilon}.
		\end{equation}
		
		For $r \in \Z_+$, we have  
		\begin{align*}
		\# \{(i_1, \ldots, i_m) \in \Z_{\geq 0}^m:~2^{(i_1 + \cdots + i_m)}=2^r\} = 
		{{r + m - 1}\choose{m-1}} .
		\end{align*}
		Then since $\beta < 0$, we get 
		\begin{align*}
		N(X, S_d \times A) - N_Y(X, S_d \times A) & \ll
		\sum_{r=0}^{\infty}\sum_{\substack{(i_1, \ldots, i_m) \in \Z_{\geq 0}^m\\ Y/2^{m} < 2^{(i_1 + \cdots + i_m)}=2^r}}(2^{(i_1 + \cdots + i_m)})^{\beta + \varepsilon}\\
		& \leq \sum_{r=\lceil \frac{\log(Y)}{\log(2)}-m \rceil}^{\infty}{{r + m - 1}\choose{m-1}}(2^{\beta +\varepsilon})^{r} \\
		& \ll \sum_{r=\lceil \frac{\log(Y)}{\log(2)}-m \rceil}^{\infty}r^{m-1}(2^{\beta + \varepsilon})^{r}\\
		& \ll (\log Y)^{m-1} Y^{\beta + \varepsilon},
		\end{align*}
		which completes the proof of Proposition \ref{crucialprop}.

\end{proof}

\subsection{Completion of the proof}
	We now complete the proof of Theorem \ref{thm:main_general}. Recall that 
	\begin{align*}
	C_Y=\lim_{X \rightarrow \infty} \frac{N_Y(X, S_d \times A)}{X^{1/|A|}}.
	\end{align*}
	By \eqref{eq:NYY_ineq}, we have 
	\begin{align*}
	N_{Y_1}(X,S_d \times A) \leq N_{Y_2}(X,S_d \times A)
	\end{align*}
	for $0 < Y_1 < Y_2$, hence the sequence $(C_Y)_{Y > 0}$ is monotone increasing. Now, by Proposition \ref{crucialprop} we have 
	\begin{align*}
	\frac{N(X, S_d \times A)}{X^{1/|A|}} \leq \frac{N_Y(X, S_d \times A)}{X^{1/|A|}} + o_Y(1).
	\end{align*}
	Hence 
	\begin{equation}\label{eq:limsup1}
	\limsup_{X \rightarrow \infty} \frac{N(X, S_d \times A)}{X^{1/|A|}} \leq \limsup_{X \rightarrow \infty} 
	\frac{N_Y(X, S_d \times A)}{X^{1/|A|}}  
	+ o_Y(1) = C_Y + o_Y(1).
	\end{equation}
	By choosing (for example) $Y=1$, we see that the left side of \eqref{eq:limsup1} is bounded. Since by \eqref{eq:NY_ineq} we have 
	\begin{align*}
	\frac{N_Y(X, S_d \times A)}{X^{1/|A|}} \leq \frac{N(X, S_d \times A)}{X^{1/|A|}},
	\end{align*} we see that the sequence $(C_Y)_{Y>0}$ is bounded as well.
	We have shown that $(C_Y)_{Y>0}$ is a bounded, monotone sequence, hence the limit
	\begin{align*}
	C:=\lim_{Y \rightarrow \infty} C_Y
	\end{align*}
	exists. 
	
	Now, again by \eqref{eq:NY_ineq} we have
	\begin{align*}
	C_Y = \liminf_{X\to\infty}{\frac{N_Y(X, S_d \times A)}{X^{1/|A|}}} \leq  
	\liminf_{X\to\infty}{\frac{N(X, S_d \times A)}{X^{1/|A|}}}, 
	\end{align*}
	so that
	\begin{align}\label{Cbound1}
	C = \lim_{Y\to\infty} C_Y \leq \liminf_{X\to\infty}{\frac{N(X, S_d \times A)}{X^{1/|A|}}}. 
	\end{align}
	Again by Proposition \ref{crucialprop} we have
	\begin{align*}
	\limsup_{X \to \infty} \frac{N(X, S_d \times A)}{X^{1/|A|}} & \leq 
	\limsup_{X \to \infty} \frac{N(X, S_d \times A) - N_Y(X, S_d \times A)}{X^{1/|A|}} + 
	\limsup_{X \to \infty} \frac{N_Y(X, S_d \times A)}{X^{1/|A|}}\\
	& \leq o_Y(1) + C_Y.
	\end{align*}
	Let $Y \rightarrow \infty$ to get 
	\begin{align}\label{Cbound2}
	\limsup_{X \to \infty} \frac{N(X, S_d \times A)}{X^{1/|A|}} \leq C.
	\end{align}
	Combining (\ref{Cbound1}) and (\ref{Cbound2}), we conclude that 
	\begin{align*}
	\lim_{X \to \infty} \frac{N(X, S_d \times A)}{X^{1/|A|}} =C.
	\end{align*}
	This completes the proof of Theorem \ref{thm:main_general}. \qed

\section*{Acknowledgments}
We would like to thank Solly Parenti for some preliminary discussions which helped to inspire this project, and also for helpful comments on the completed version of
this paper. We would also like to thank Brandon Alberts, Alexander Slamen, and Melanie Matchett Wood for helpful comments.

RM and WLT were partially supported by grants from the Simons Foundation (No. 421991) and the
National Science Foundation (No. DMS-1757872).
FT was partially supported by grants from the Simons Foundation (Nos. 563234 and 586594).
JW was partially supported by the Foerster-Bernstein Fellowship at Duke University.

Claude was used to proofread and to search for any missed references; AI tools were not used in any more substantial way.

\end{document}